\documentclass[12pt]{amsart}

\usepackage{amsmath,amssymb,amsbsy,amsfonts,amsthm,latexsym,
                        amsopn,amstext,amsxtra,euscript,amscd,mathrsfs,color,bm,cite}
                       
\usepackage{float} 
\usepackage[english]{babel}
\usepackage{mathtools}
\usepackage{todonotes}
\usepackage{url}
\usepackage[colorlinks,linkcolor=blue,anchorcolor=blue,citecolor=blue,backref=page]{hyperref}
\bibliographystyle{plain}
\usepackage{todonotes}

\RequirePackage{mathrsfs} \let\mathcal\mathscr

\usepackage{enumitem}

\def\le{\leqslant}
\def\ge{\geqslant}

\usepackage{mathtools}
\usepackage{todonotes}
\usepackage[norefs,nocites]{refcheck}

\usepackage[english]{babel}
\usepackage{mathtools}
\usepackage{todonotes}
\usepackage{url}
\usepackage[colorlinks,linkcolor=blue,anchorcolor=blue,citecolor=blue,backref=page]{hyperref}

\begin{document}

\newtheorem{theorem}{Theorem}
\newtheorem{lemma}[theorem]{Lemma}
\newtheorem{claim}[theorem]{Claim}
\newtheorem{cor}[theorem]{Corollary}
\newtheorem{prop}[theorem]{Proposition}
\newtheorem{remark}[theorem]{Remark}
\newtheorem{example}[theorem]{Example}
\newtheorem{definition}{Definition}
\newtheorem{question}[theorem]{Question}
\newtheorem{conj}[theorem]{Conjecture}
\newcommand{\hh}{{{\mathrm h}}}

\numberwithin{equation}{section}
\numberwithin{theorem}{section}
\numberwithin{table}{section}

\numberwithin{figure}{section}

\def\sssum{\mathop{\sum\!\sum\!\sum}}
\def\ssum{\mathop{\sum\ldots \sum}}
\def\dsum{\mathop{\quad \sum \qquad \sum}}
\def\iint{\mathop{\int\ldots \int}}


\newfont{\teneufm}{eufm10}
\newfont{\seveneufm}{eufm7}
\newfont{\fiveeufm}{eufm5}
%
%
\newfam\eufmfam
     \textfont\eufmfam=\teneufm
\scriptfont\eufmfam=\seveneufm
     \scriptscriptfont\eufmfam=\fiveeufm
%
%
\def\frak#1{{\fam\eufmfam\relax#1}}

\newcommand{\bflambda}{{\boldsymbol{\lambda}}}
\newcommand{\bfmu}{{\boldsymbol{\mu}}}
\newcommand{\bfxi}{{\boldsymbol{\xi}}}
\newcommand{\bfrho}{{\boldsymbol{\rho}}}

\def\fA{{\mathfrak A}}
\def\fB{{\mathfrak B}}
\def\fC{{\mathfrak C}}
\def\fK{{\mathfrak K}}
\def\fM{{\mathfrak M}}
\def\fS{{\mathfrak S}}
 \def\fW{{\mathfrak W}}
  \def\fU{{\mathfrak U}}

\def \balpha{\bm{\alpha}}
\def \bbeta{\bm{\beta}}
\def \bgamma{\bm{\gamma}}
\def \blambda{\bm{\lambda}}
\def \bchi{\bm{\chi}}
\def \bphi{\bm{\varphi}}
\def \bpsi{\bm{\psi}}
\def \bomega{\bm{\omega}}
\def \btheta{\bm{\vartheta}}

\def \bzeta{\bm{\zeta}}
\def \bxi{\bm{\xi}}

\def\eqref#1{(\ref{#1})}

\def\vec#1{\mathbf{#1}}

\def\vk{\vec{k}}
\def\vX{\vec{X}}

\def \LLN  {\prec \hskip-6pt \prec}
\def \GGN {\succ \hskip-6pt \succ}

\def\cA{{\mathcal A}}
\def\cB{{\mathcal B}}
\def\cC{{\mathcal C}}
\def\cD{{\mathcal D}}
\def\cE{{\mathcal E}}
\def\cF{{\mathcal F}}
\def\cG{{\mathcal G}}
\def\cH{{\mathcal H}}
\def\cI{{\mathcal I}}
\def\cJ{{\mathcal J}}
\def\cK{{\mathcal K}}
\def\cL{{\mathcal L}}
\def\cM{{\mathcal M}}
\def\cN{{\mathcal N}}
\def\cO{{\mathcal O}}
\def\cP{{\mathcal P}}
\def\cQ{{\mathcal Q}}
\def\cR{{\mathcal R}}
\def\cS{{\mathcal S}}
\def\cT{{\mathcal T}}
\def\cU{{\mathcal U}}
\def\cV{{\mathcal V}}
\def\cW{{\mathcal W}}
\def\cX{{\mathcal X}}
\def\cY{{\mathcal Y}}
\def\cZ{{\mathcal Z}}
\newcommand{\rmod}[1]{\: \mbox{mod} \: #1}

\def\cg{{\mathcal g}}

\def\vr{\mathbf r}

\def\e{{\mathbf{\,e}}}
\def\ep{{\mathbf{\,e}}_p}
\def\eq{{\mathbf{\,e}}_q}

\def\Tr{{\mathrm{Tr}}}
\def\Nm{{\mathrm{Nm}}}

 \def\SS{{\mathbf{S}}}

\def\lcm{{\mathrm{lcm}}}

\def\({\left(}
\def\){\right)}
\def\fl#1{\left\lfloor#1\right\rfloor}
\def\rf#1{\left\lceil#1\right\rceil}

\def\mand{\qquad \mbox{and} \qquad}





\hyphenation{re-pub-lished}

\mathsurround=1pt

\def\bfdefault{b}

\def \A{{\mathbb A}}
\def \P{{\mathbb P}}

\def \F{{\mathbb F}}
\def \K{{\mathbb K}}
\def \N{{\mathbb N}}
\def \Z{{\mathbb Z}}
\def \Q{{\mathbb Q}}
\def \R{{\mathbb R}}
\def \C{{\mathbb C}}
\def\Fp{\F_p}
\def \fp{\Fp^*}

\def\Kmnp{\cK_p(m,n)}
\def\Kmnq{\cK_q(m,n)}
\def\KKap{\cH_p(a)}
\def\KKaq{\cH_q(a)}
\def\KKmnp{\cH_p(m,n)}
\def\KKmnq{\cH_q(m,n)}

\def\Kl{{\mathsf K}}

\def\Klmnp{\cK_p(\ell, m,n)}
\def\Klmnq{\cK_q(\ell, m,n)}

\def \SALMNq {\cS_q(\balpha;\cL,\cI,\cJ)}
\def \SALMNp {\cS_p(\balpha;\cL,\cI,\cJ)}

\def \SACXIQX {\fS_h(\balpha,\bzeta, \bxi; \cI, Q,X)}

\def \balpha{\bm{\alpha}}
\def \bbeta{\bm{\beta}}
\def \bgamma{\bm{\gamma}}
\def \blambda{\bm{\lambda}}
\def \bchi{\bm{\chi}}
\def \bphi{\bm{\varphi}}
\def \bpsi{\bm{\psi}}

\newcommand{\commA}[2][]{\todo[#1,color=green!60]{A: #2}}
\newcommand{\commD}[2][]{\todo[#1,color=magenta!60]{D: #2}}
\newcommand{\commI}[2][]{\todo[#1,color=red!60]{I: #2}}
\newcommand{\commS}[2][]{\todo[#1,color=blue!60]{S: #2}}

\def\SIJq{S_q(\balpha; \cI,\cJ)}
\def\SIJp{S_p(\balpha; \cI,\cJ)}

\def\UMNp{U_p(\balpha, \bbeta; M,N)}
\def\VMNp{V_p(\balpha, \bbeta; M,N)}
\def\LG#1{\(\frac{#1}{p}\)}

\def\SMJq{S_q(\balpha; \cM,\cJ)}
\def\SMJp{S_p(\balpha; \cM,\cJ)}

\def\WIKq{W_{a,q}(\bgamma; \cI,K)}
\def\WIKp{W_{a,p}(\bgamma; \cI,K)}
\def\WMKq{W_{a,q}(\bgamma; \cM,K)}
\def\WMKp{W_{a,p}(\bgamma; \cM,K)}

\def\WaIKq{W_{a,q}(\balpha, \bgamma; \cI;K)}
\def\WaIKp{W_{a,p}(\balpha, \bgamma; \cI,K)}
\def\WaMKq{W_{a,q}(\balpha, \bgamma; \cM;K)}
\def\WaMKp{W_{a,p}(\balpha, \bgamma; \cM;K)}
\def\WaMKKq{\widetilde{W}_{a,q}(\balpha, \bgamma; \cM,\cK)}
\def\WaMKKp{\widetilde{W}_{a,p}(\balpha, \bgamma; \cM,\cK)}

\def\WaIKKq{\widetilde{W}_{a,q}(\balpha, \bgamma; \cI,\cK)}
\def\WaIKKp{\widetilde{W}_{a,p}(\balpha, \bgamma; \cI,\cK)}

\def\WAMNQ{W_q(\balpha, \bgamma; M,N,Q)}

\def\RIJp{\cR_p(\cI,\cJ)}
\def\RIJq{\cR_q(\cI,\cJ)}

\def\TWXJp{\cT_p(\bomega;\cX,\cJ)}
\def\TWXJq{\cT_q(\bomega;\cX,\cJ)}
\def\TWpXJp{\cT_p(\bomega_p;\cX,\cJ)}
\def\TWqXJq{\cT_q(\bomega_q;\cX,\cJ)}
\def\TWJq{\cT_q(\bomega;\cJ)}
\def\TWqJq{\cT_q(\bomega_q;\cJ)}

\newcommand{\supp}{\operatorname{supp}}

  \def \kbar{k^{-1} }
 \def \xbar{x^{-1} }
  \def \ybar{y^{-1} }

\title[Sparse representations of squares]{On sparsity of  representations of polynomials
as linear combinations of exponential functions} 

\author[D. Ghioca]{Dragos Ghioca}
\author[A. Ostafe]{Alina Ostafe}
\author[S. Saleh]{Sina Saleh}
\author[I. E. Shparlinski]{Igor E. Shparlinski}

\keywords{squares, sparse representations}
\subjclass[2010]{Primary 11B37, Secondary 11G25, 37P55}

\address{
Dragos Ghioca\\
Department of Mathematics\\
University of British Columbia\\
Vancouver, BC V6T 1Z2\\
Canada
}

\email{dghioca@math.ubc.ca}

\address{Alina Ostafe\\
School of Mathematics and Statistics\\
University of New South Wales\\
Sydney NSW 2052\\
Australia
}

\email{alina.ostafe@unsw.edu.au}

\address{
Sina Saleh\\
Department of Mathematics\\
University of British Co\-lumbia\\
Vancouver, BC V6T 1Z2\\
Canada
}

\email{sinas@math.ubc.ca}

\address{Igor E. Shparlinski\\ 
School of Mathematics and Statistics\\
University of New South Wales\\
Sydney NSW 2052\\
Australia
}

\email{igor.shparlinski@unsw.edu.au}

\keywords{Squares, }
\subjclass[2010]{11A63, 11B57}

\begin{abstract} 
Given an integer $g$ and also some given integers $m$ (sufficiently large) and  $c_1,\dots, c_m$, we show that the number of all non-negative integers $n\le M$ with the property that there exist non-negative integers $k_1,\dots, k_m$ such that $$n^2=\sum_{i=1}^m c_i g^{k_i}$$ is $o\left(\(\log M\)^{m-1/2}\right)$. We also obtain a similar bound when dealing with more general inequalities $$\left|Q(n)-\sum_{i=1}^m c_i\lambda^{k_i}\right|\le B,$$ where $Q\in \C[X]$ and also $\lambda\in\C$ (while $B$ is a real number).
\end{abstract} 

\maketitle

\section{Introduction}
\label{sec:intro}

\subsection{Set-up} 
Motivated by applications to the dynamical Mordell-Lang conjecture (for more details on this open problem in arithmetic dynamics, we refer the reader to~\cite{BGT}), the authors~\cite{GOSS} have recently 
considered the question about representations of values  of 
polynomials $Q \in \overline{\mathbb{Q}}[X]$ as fixed linear combinations of powers of a prime $p$.
In particular, it is shown in~\cite{GOSS}  that for fixed coefficients $c_1, \ldots, c_m \in \overline{\mathbb{Q}}$
and integral exponents $a_1, \ldots, a_m$ 
the number of positive integers  $n \le N$ for  which $Q(n)$  can be represented as 
$$
Q(n) =  \sum_{i=1}^m c_i p^{a_ik_i}
$$
with some $k_1, \ldots, k_m\in \Z$ is bounded by $O\(\(1+\log N\)^m\)$
where the implied constant depends only on the initial data. 
In fact it is easy to see that for $Q(n) = n$, this bound is tight.  Furthermore, 
a similar result is given in~\cite{GOSS} for representations of the form
\begin{equation}
\label{eq:Qciglambdaj}
Q(n) = \sum_{i=1}^m \sum_{j=1}^s c_{i,j}\lambda_j^{a_ik_i},
\end{equation}
with algebraic integers  
$\lambda_1,\ldots, \lambda_s$, each one of them of absolute value equal to $q$ or 
$\sqrt{q}$ (where $q$ is a given power of a prime number).

Here we first consider representations of the form~\eqref{eq:Qciglambdaj}
with $s=1$ but arbitrary complex (rather than algebraic) parameters. We also generalise 
this to approximations of polynomials rather than precise equalities, that is, 
we consider inequalities of the form 
\begin{equation}
\label{eq:QciglambdaB}
\left|Q(n)- \sum_{i=1}^m c_i \lambda^{k_i}\right|\le B, 
\end{equation}
with $Q \in \C[X]$, $c_1, \ldots, c_m ,\lambda \in \C$ and some $B\in \R$. 

\subsection{Notation} 
We now recall that the notations $A= O(B)$, $A \ll B$ and $B \ge A$ 
are all equivalent to the inequality $|A| \le cB$ with some constant $c$.
Throughout this work all implied constants may depend on the polynomial $Q$ and the 
parameters  $c_i$, $i=1, \ldots, m$ and  $\lambda$ in~\eqref{eq:QciglambdaB}  and
also on  $g$ in~\eqref{eq:Qcig} below. 

For a finite set $\cS$ we use $\# \cS$ to denote its cardinality.

\subsection{New results} \label{sec:res}
We remark that the argument of~\cite{GOSS} is based on a result of 
Laurent~\cite[Th\'{e}or\`{e}me~6]{Lau},  which required all parameters to be 
defined over a number field; furthermore, the result of~\cite{Lau} refers to equalities, not inequalities. Hence here we use a different approach to establish the 
following result. 

\begin{theorem}
\label{thm:Qn lambda B}
Let $c_1, \ldots, c_m, \lambda \in \C$,  $B \in \R$ and let $Q\in \C[X]$ be a non-constant polynomial. 
Then for $N \ge 2$ we have 
$$
\# \{n \le N:~\text{\eqref{eq:QciglambdaB} holds for some}\ k_1, \ldots, k_m\in \Z\}
\ll (\log N)^m.
$$
\end{theorem}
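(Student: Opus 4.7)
The plan is to induct on $m$ after reducing to $|\lambda|>1$. When $|\lambda|=1$, every $|\lambda^{k_i}|=1$, so $|\sum c_i \lambda^{k_i}|\le\sum |c_i|$ forces $|Q(n)|$ to be bounded and yields only $O(1)$ solutions. When $|\lambda|<1$, the substitution $\lambda\mapsto 1/\lambda$ and $k_i\mapsto -k_i$ puts us in the $|\lambda|>1$ setting. A union bound over the $m!$ permutations lets us further assume $k_1\ge k_2\ge\cdots\ge k_m$ in every representation, and WLOG each $c_i\ne 0$.

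The base case $m=1$ is direct: for $k_1\ge 0$, $|c_1||\lambda|^{k_1}\le |Q(n)|+B\ll N^{\deg Q}$ bounds $k_1$ by $O(\log N)$, and for each such $k_1$, the disk $D(c_1\lambda^{k_1},B)$ has $O_{Q,B}(1)$ integer preimages under the non-constant polynomial $Q$ (using $|Q(n+1)-Q(n)|\gg |n|^{\deg Q-1}$ for large $|n|$). The case $k_1<0$ contributes only $O(1)$ further solutions, since then $|c_1\lambda^{k_1}|\le|c_1|$ bounds $|Q(n)|$, giving $O(\log N)$ overall.

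For the inductive step, fix a threshold $T$ with $|\lambda|^{-T}\sum_{i=1}^m |c_i|\le \delta/2$, where $\delta$ is the (positive) minimum of $|\sum_{i\in I} c_i \lambda^{-e_i}|$ taken over all nonempty $I\subseteq\{1,\dots,m\}$ and all $(e_i)_{i\in I}\in\{0,\dots,T-1\}^{|I|}$ for which this sum is nonzero. Given a sorted representation, define its top block $I:=\{i : k_i>k_1-T\}$, its signature $(k_i-k_1)_{i\in I}$, and set $P:=\sum_{i\in I} c_i \lambda^{k_i-k_1}$; the full sum then equals $\lambda^{k_1}P+\sum_{i\notin I} c_i \lambda^{k_i}$, with the second part of magnitude at most $(\delta/2)|\lambda|^{k_1}$ by choice of $T$. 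There are only finitely many possible signatures. If $P=0$, the full sum reduces to $\sum_{i\notin I} c_i \lambda^{k_i}$, which (because $c_i\ne 0$ forces $|I|\ge 2$) is a representation with at most $m-2$ terms, so the induction hypothesis applied to the sub-collection $\{c_i:i\notin I\}$ gives $O((\log N)^{m-|I|})$ admissible $n$ per signature. If $P\ne 0$, then $|P|\ge \delta$ yields $|\text{full sum}|\ge (\delta/2)|\lambda|^{k_1}$, which combined with $|\text{full sum}|\le |Q(n)|+B = O(N^{\deg Q})$ forces $k_1=O(\log N)$; for each such $k_1$ and each signature set $R(x):=Q(x)-\lambda^{k_1}P$ and apply the induction to $|R(n)-\sum_{i\notin I} c_i \lambda^{k_i}|\le B$, getting $O((\log N)^{m-|I|})$ admissible $n$ per $k_1$, hence $O((\log N)^{m-|I|+1})\le O((\log N)^m)$ after summing over $k_1$. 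The finitely many signatures combine to the desired $O((\log N)^m)$.

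The main technical difficulty is that the reduced polynomial $R=Q-\lambda^{k_1}P$ has constant term of size $N^{O(1)}$, so a naive induction risks accumulating an extra $\log N$ factor per reduction. This is resolved by strengthening the inductive statement to $\text{count}\le C\,(\log N+\log H+1)^{m}$, where $H$ is the maximum absolute value of the coefficients of the ambient polynomial; the base case manifestly has this form, and since $\log H_R=O(\log N+\log H)$ under each reduction, the stronger bound propagates through the induction and specialises (with $H$ fixed, depending only on $Q$) to the desired $O((\log N)^m)$.
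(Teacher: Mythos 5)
Your overall strategy --- induct on $m$ after reducing to $|\lambda|>1$, sort the exponents, and isolate the dominant contribution --- is close in spirit to the paper's, but the inductive step as written contains a genuine gap: the threshold $T$ is defined circularly, and in general no admissible $T$ exists. You require $|\lambda|^{-T}\sum_{i}|c_i|\le \delta/2$, where $\delta=\delta(T)$ is a minimum over $e_i\in\{0,\ldots,T-1\}$, so $\delta$ depends on $T$. But $\delta(T)$ can decay at the same exponential rate as $|\lambda|^{-T}$. Take, for instance, $m=4$, $\lambda=2$, $c_1=1$, $c_2=-2$, $c_3=1$, and any $c_4\ne0$: the choice $I=\{1,2,3\}$, $(e_1,e_2,e_3)=(0,1,T-1)$ gives $\sum_{i\in I}c_i\lambda^{-e_i}=1-1+2^{-(T-1)}=2^{-(T-1)}$, hence $\delta(T)\le 2^{-(T-1)}$, and your requirement $2^{-T}\sum_i|c_i|\le\delta(T)/2$ forces $\sum_i|c_i|\le 1$, which is false. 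The underlying phenomenon is that a strict sub-block of the top block can cancel exactly (here $c_1+c_2\lambda^{-1}=0$), leaving a residual $|P|$ of the same order $|\lambda|^{-T}$ as the tail you are trying to absorb, so the lower bound $|\lambda^{k_1}P+\text{tail}|\ge(\delta/2)|\lambda|^{k_1}$ cannot be secured by any fixed $T$.

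The paper avoids this entirely by only ever examining the single topmost gap. Sorting $k_1\le\cdots\le k_{s+1}$, it fixes a constant $\Delta$ and distinguishes $k_{s+1}-k_s<\Delta$ from $k_{s+1}-k_s\ge\Delta$. In the first case it merges just the top \emph{two} coefficients into the single coefficient $c_s+c_{s+1}\lambda^h$ with $h<\Delta$ (this new coefficient stays bounded and is allowed to vanish, so no lower bound on a block contribution is ever needed) and applies the induction with $m-1$ terms. In the second case $k_{s+1}\ll\log N$, and --- after first normalising so that every $k_i\ge1$ by absorbing negative exponents into $B$ --- one simply counts the $O((\log N)^{s+1})$ tuples directly, each giving $O(1)$ values of $n$; there is no recursion, hence the polynomial $Q$ is never modified and nothing like your $\log H$-tracking is needed. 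Your observation that $P=0$ forces $|I|\ge2$ (dropping to $\le m-2$ terms) is a nice shortcut, and the idea of strengthening the inductive statement to $(\log N+\log H+1)^m$ is reasonable in isolation, but neither repairs the broken threshold step. The cleanest fix is to adopt the paper's one-gap-at-a-time merging.
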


We  observe that the implied constant in Theorem~\ref{thm:Qn lambda B} is effectively computable
in terms of the sizes of the initial data, 
while in the result of~\cite{GOSS} it is not. 

We also note (see Example~\ref{ex:transcendental}) that if one  considers inequalities of the form
\begin{equation}
\label{eq:general inequality}
\left|Q(n) - \sum_{i=1}^m \sum_{j=1}^s c_{i,j}\lambda_j^{a_ik_i}\right|\le B,
\end{equation}
for some arbitrary complex numbers $\lambda_j$, then one cannot expect a similar result as in Theorem~\ref{thm:Qn lambda B}. More precisely, there exists $\lambda\in\C$ such that for any $\varepsilon > 0$ and \emph{each} sufficiently large integer $n$, there exists some positive integer $k_n$ with the property that
$$
\left|n - \frac{i}{\pi}\cdot \left(2^{k_n}-\lambda^{k_n}\right)\right| \le \varepsilon,
$$ 
see Example~\ref{ex:transcendental} for more details. 

Furthermore, we consider the case of  perfect squares and study relations of the form
\begin{equation}
\label{eq:Qcig}
n^2=  \sum_{i=1}^m c_i g^{k_i}, 
\end{equation}
with non-zero integer coefficients $c_1, \ldots, c_m$ and  an integer  basis
$g \ge 2$. Using the square-sieve of Heath-Brown~\cite{HB} 
we improve the exponent $m$ of $\log N$.

For $m=2$,  we write the equation~\eqref{eq:Qcig} as $n^2 = g^{k_1}\(c_1 +  c_2 g^{k_2-k_1}\)$
(with $k_2 \ge k_1$). 
Hence either $c_1 +  c_2 g^{k_2-k_1}$ or $c_1g +  c_2 g^{k_2-k_1+1}$ is a
perfect square $j^2$ for some $j \le n \le N$. Since the largest prime divisor of $j^2 + c$ 
for any $c \ne 0$ tends to infinity with $j$, see~\cite{Kea},  we see that 
$k_2-k_1$ can take only finitely many values. Hence for $m=2$ we have  $O(\log N)$ solution 
to~\eqref{eq:Qcig} with $n\le N$. 
This  bound  is obviously the 
best possible as the example  of the numbers  $2^{k_1} +  2^{k_2}$, with $k_2=k_1+3$ and even 
$k_1$, shows.   We also note that in~\cite[Theorem~5.1~(B)]{CGSZ}, it is established even more generally the \emph{precise} set of all positive integers $n$ for which $u_n$ is of the form $c_1g^{k_1}+c_2g^{k_2}$ (for some given $g$, $c_1$, $c_2$), where $\{u_n\}_{n\ge 1}$ is an arbitrary linear recurrence sequence (the result of~\cite[Theorem~5.1~(B)]{CGSZ} is stated only when $g=p$ is a prime number, but as remarked in~\cite[Section~5]{CGSZ}, the method extends verbatim to an arbitrary integer $g$).

So we are mostly interested in  the case of $m \ge 3$; furthermore, we note that for $m=3$ (and in some cases, depending on $g$ and the $c_i$, even for $m=4$), more precise results are available in the literature (see~\cite{CZ13}). However, when $m\ge 5$, it is very difficult to find a precise description of all $n\in\mathbb{N}$ such that $n^2$ is of the form~\eqref{eq:Qcig} (for some given integers $g$ and $c_i$).

\begin{theorem}
\label{thm:Qn g}
Let $m \ge 3$ and let $c_1, \ldots, c_m$ and 
$g \ge 2$ be   integers.  
Then for $N \ge 2$ we have 
$$
\# \{n \le N:~\text{\eqref{eq:Qcig} holds for some}\ k_1, \ldots, k_m\in \Z\}
\le (\log N)^{m-\gamma_m + o(1)},
$$
where 
$$
\gamma_3 =  \frac{677}{1969} \mand 
\gamma_m = \frac{677m}{1323m +1354} \quad \text{for} \ m\ge 4 . 
$$ 
\end{theorem}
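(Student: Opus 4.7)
The plan is to refine the elementary bound $(\log N)^m$, which comes from observing that any solution of \eqref{eq:Qcig} with $n\le N$ forces each $k_i\le C\log N =: K$ for a constant $C$ depending only on $g$ and the $c_i$, by applying the square sieve of Heath-Brown~\cite{HB}. The number of admissible tuples $\vec k = (k_1,\ldots,k_m) \in [0,K]^m$ is $O((\log N)^m)$, and the count to be estimated is at most the number of such $\vec k$ for which $f(\vec k) := \sum_{i=1}^m c_i g^{k_i}$ is a non-negative perfect square. I would introduce an auxiliary parameter $Q$ (to be chosen as a power of $\log N$) and take $\cP$ to be the set of primes in $[Q,2Q]$ coprime to $g\,c_1\cdots c_m$, so that $\#\cP \asymp Q/\log Q$ by the prime number theorem. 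Heath-Brown's square sieve then yields a bound of the form
$$
\#\{\vec k : f(\vec k)\text{ is a square}\} \ll \frac{K^m}{\#\cP} + \frac{1}{(\#\cP)^2}\sum_{p_1\ne p_2 \in \cP}\bigl|S(p_1p_2)\bigr|,
$$
where $S(q) := \sum_{\vec k \in [0,K]^m}(f(\vec k)/q)$ is the Jacobi symbol sum.

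The heart of the argument is bounding $|S(q)|$ with a genuine power saving in $q=p_1p_2$. Let $d = \operatorname{ord}_q(g)$ and let $H_q = \langle g\rangle \le (\Z/q\Z)^*$; since $g^{k_i} \bmod q$ depends only on $k_i \bmod d$, splitting each $k_i$ into residue classes modulo $d$ reduces $S(q)$, up to weights $\prod_i(K/d + O(1))$, to the multilinear character sum over a subgroup
$$
U(q) := \sum_{y_1,\ldots,y_m \in H_q}\left(\frac{c_1 y_1 + \cdots + c_m y_m}{q}\right).
$$
I would then factor the Jacobi symbol via CRT as $(\cdot/p_1)(\cdot/p_2)$ and invoke Weil-type estimates for complete multiplicative character sums -- supplemented, when $|H_q|$ is smaller than $q^{1/2}$, by Bourgain--Konyagin-style bounds for sums over multiplicative subgroups -- to obtain $|S(q)| \ll K^m q^{-\eta}$ for some explicit $\eta>0$ and for all but a sparse set of pairs $(p_1,p_2) \in \cP^2$. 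Standard results on the distribution of the multiplicative order of a fixed integer modulo primes ensure that the exceptional pairs contribute a negligible amount, so they can be absorbed with the trivial bound.

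With such a power-saving estimate in hand, one optimises $Q$ to balance the two terms in the sieve, extracting a saving of $(\log N)^{\gamma_m}$ over the trivial bound; the separate case $m=3$ in the statement reflects the fact that the balance of terms shifts when $m$ is small, since fewer variables leave less room for averaging across the $y_i$ in $U(q)$. The main obstacle is exactly this character sum estimate for $U(q)$: the trivial bound is already $|H_q|^m$, so one needs a genuine power saving in $q$, uniformly in the prime pair $q = p_1p_2$, and this forces a careful treatment of the pairs where $g$ has anomalously small order. The specific rational constants appearing in $\gamma_3$ and $\gamma_m$ simply record the outcome of this optimisation together with the quality of the available subgroup character sum bounds.
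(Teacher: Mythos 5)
Your overall strategy --- the Heath--Brown square sieve applied to the family $f(\vec k) = \sum_i c_i g^{k_i}$ over $[0,K]^m$ with $K \asymp \log N$, reducing to Jacobi symbol sums over the subgroup generated by $g$ --- is indeed the paper's strategy, but three of your steps have concrete problems. The claim that every solution forces all $k_i \le C\log N$ fails when the $c_i$ have mixed signs (two large terms can nearly cancel): the paper first splits off the case $k_m - k_{m-1} < h_0$, handled by Theorem~\ref{thm:Qn lambda B} by merging two terms into one, and only then gets $k_m \ll \log N$. More seriously, the sieving primes cannot be taken to be all primes in $[Q,2Q]$ coprime to the data; to get a usable power saving in $U(q)$ one must control $\tau_\ell(g)$ tightly. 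The paper constructs a set $\cL_z$ in which every $\ell$ satisfies $P(\ell-1) \ge z^{\alpha}$ with $\alpha = 0.677$ (the Baker--Harman input), \emph{and} $P(\ell-1) \mid \tau_\ell(g)$, \emph{and} a fixed value of $\nu_2(\tau_\ell(g))$. The last two conditions are forced by the CRT factorisation of the Jacobi sum modulo $\ell r$ (one needs the reduced orders coprime and odd), and the quantitative $\alpha$ is exactly what produces the rational constants in $\gamma_m$. Simply absorbing ``exceptional'' primes via the trivial bound does not work: without the condition $P(\ell-1) \mid \tau_\ell(g)$ a positive proportion of primes would be exceptional, and the stated exponents would not emerge.

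The central gap is the estimate for $U(q)$. Bourgain--Konyagin-type bounds concern exponential sums over multiplicative subgroups and do not transfer to the Jacobi symbol of a linear form in several subgroup variables; nor does the paper use anything of that kind. The actual mechanism is a reparametrisation: after the substitution $\vartheta = g^h$ with $h = \gcd(\tau_\ell(g), \tau_r(g))$ (making the reduced orders modulo $\ell$ and $r$ coprime and odd), write $\vartheta \equiv \rho^d \pmod \ell$ with $\rho$ a primitive root and $d = (\ell-1)/\tau_\ell(\vartheta)$. The subgroup sum then becomes $d^{-m}$ times a \emph{complete} sum over $(\F_\ell^*)^m$ of $\eta\big(a_1 w_1^d + \cdots + a_m w_m^d\big)\chi_1(w_1)\cdots\chi_m(w_m)$; the diagonal hypersurface is smooth, so Katz's theorem gives $O(d^m \ell^{(m+1)/2})$, hence $O(\ell^{(m+1)/2})$ after dividing by $d^m$ --- a bound uniform in the subgroup size --- with the stronger $O(\tau_\ell(\vartheta)\,\ell^{(m-1)/2})$ in the untwisted case $b_1=\cdots=b_m=0$ (needed when completing the incomplete sum, and requiring $d$ even, whence the $\nu_2$-constraint on $\cL_z$). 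Without this conversion from a subgroup sum to a full diagonal-form sum, the rest of your outline cannot reach $\gamma_3 = 677/1969$ and $\gamma_m = 677m/(1323m+1354)$, which trace back precisely to $\alpha = 0.677$ fed through this estimate and the optimisation of $z$.
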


We observe that $\gamma_m \to 677/1323 = 0.5117 \ldots $ as $m \to \infty$. Thus for large 
$m$  Theorem~\ref{thm:Qn g}  saves more than $1/2$ compared to the general bound of  Theorem~\ref{thm:Qn lambda B}.
More precisely, simple calculations show that  $\gamma_m > 1/2$ for $m \ge 44$. 

We remark that the proof of Theorem~\ref{thm:Qn g} is based on some  ideas and results
from~\cite{LuSh}, later enhanced in~\cite{BaSh}. The numerical constants come from the work
of Baker and Harman~\cite{BaHa} on large prime divisors of shifted primes. 

Furthermore, as in~\cite{BaSh} we observe that under the Generalised Riemann Hypothesis
we can obtain a slightly larger value of $\gamma_m$.  

On the other hand defining $s$ as the largest integer with  $s(s+1)/2 \le m$ and considering numbers
$\(g^{h_1} + \ldots + g^{h_s}\)^2$  with 
$$
h_i \le \frac{\log \left(N/s^2\right)}{2\log g}, \quad i =1, \ldots, s,
$$
we see that for at least one choice $c_1, \ldots, c_t> 0$ with $t \le m$ 
and $ c_1+ \ldots + c_t \le s(s+1)/2$ occurs at least $C_0 (\log N)^s$ times,
for some constant $C_0>0$, which shows that the best possible exponent in 
any result of the type of Theorem~\ref{thm:Qn g} must grow with $m$ (at least as 
about $\sqrt{2m}$ for large $m$).

Note that cycling over all $g^m$ choices of 
$$(c_1, \ldots, c_m) \in \{0, \ldots, g-1\}^m
$$
we obtain from Theorem~\ref{thm:Qn g}  a result about the sparsity of the 
values of $n$ for which $n^2$ has at most $m$ non-zero digits to base $g$. 
Various finiteness results on sparse digital representations of perfect powers
can be found in~\cite{BeBu, BBM, CZ13, Mos}. 
Note that as we have just seen, in our setting of arbitrary $m$ 
no finiteness result is  possible, and hence we can use Theorem~\ref{thm:Qn g}
to  provide  a counting result related to
such representations. More precisely we have the following straightforward consequence:
\begin{cor}
Let $m\ge 3$ and let $K\ge 1$ and $g\ge 2$ be integers. Then there are at most $K^{m-\gamma_m + o(1)}$ 
integer squares with $g$-ary expansion of length $K$ and with at most $m$ non-zero digits. 
\end{cor}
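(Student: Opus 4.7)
The approach is to reduce the statement directly to Theorem~\ref{thm:Qn g}. Any integer square $n^2$ whose $g$-ary expansion has length $K$ satisfies $n \le N := g^{K/2}$, and if its expansion has at most $m$ non-zero digits, we can write
\[
n^2 = \sum_{i=1}^{t} c_i g^{k_i}
\]
for some $t \le m$, distinct non-negative integer exponents $k_1, \ldots, k_t < K$, and digits $c_i \in \{1, \ldots, g-1\}$. Since $g$ is fixed, $\log N = (K/2) \log g$ is of the same order of magnitude as $K$. Hence for each fixed tuple $(c_1, \ldots, c_t)$ with $t \ge 3$, Theorem~\ref{thm:Qn g} (applied with $m$ replaced by $t$) yields at most $(\log N)^{t - \gamma_t + o(1)} = K^{t - \gamma_t + o(1)}$ admissible values of $n$.

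The rest is bookkeeping. The number of coefficient tuples with $t \le m$ and $c_i \in \{1, \ldots, g-1\}$ is a constant depending only on $g$ and $m$, so summing over all tuples merely multiplies the bound by a constant, which is absorbed into the $o(1)$ in the exponent. The degenerate cases $t = 1, 2$ each contribute only $O(\log N) = O(K)$ admissible values, as observed in the discussion preceding Theorem~\ref{thm:Qn g}, and these are dwarfed by the main term since $m - \gamma_m > 2$. Finally, since $\gamma_t$ is uniformly bounded above by $677/1323 < 1$, the difference $\gamma_{t+1} - \gamma_t$ is strictly less than $1$, so $t - \gamma_t$ is strictly increasing in $t$ over the range $3 \le t \le m$. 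The dominant contribution therefore comes from $t = m$, yielding the claimed bound $K^{m - \gamma_m + o(1)}$. No serious obstacle is anticipated; the content is entirely captured by Theorem~\ref{thm:Qn g}, with only the minor subtlety of confirming that the exponent $t - \gamma_t$ is maximized at $t = m$ over the relevant range.
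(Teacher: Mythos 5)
Your proof is correct and follows essentially the same route as the paper, which simply observes that one reduces to Theorem~\ref{thm:Qn g} by cycling over the $O_{g,m}(1)$ possible coefficient tuples and using $\log N \asymp K$. Your write-up is somewhat more careful than the paper's one-sentence remark (e.g.\ separating the cases $t<3$ of fewer nonzero digits and checking that $t-\gamma_t$ is maximized at $t=m$), but the underlying idea is identical.
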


\section{Proof of  Theorem~\ref{thm:Qn lambda B}}

\subsection{Counterexample to a possible extension to~\eqref{eq:general inequality}} 
Before proceeding to the proof of Theorem~\ref{thm:Qn lambda B}, we provide the Example~\ref{ex:transcendental} (mentioned in~Section~\ref{sec:res}), which shows that one cannot expect to generalise Theorem~\ref{thm:Qn lambda B} 
to~\eqref{eq:general inequality}, that is, to the case when we approximate $Q(n)$ with a sum of powers of different $\lambda_j$.

\begin{example}
\label{ex:transcendental}
We consider the sequence of positive integers $\{b_j\}_{j\ge 2}$ given by 
$$b_2=2 \mand b_{j+1}=2^{b_j}+b_j+1\text{ for }j\ge 2.$$ 
We let $\lambda=2\cdot e^{2\pi \alpha i}$, where  
$$
\alpha =\sum_{j=2}^\infty \frac{j-1}{2^{b_j}}.
$$
We let $n$ be a positive integer and show that 
\begin{equation}
\label{eq:N large small}
 n-\frac{i}{\pi}\cdot \left(2^{2^{b_n}}-\lambda^{2^{b_n}}\right)  \ll \frac{n}{ b_{n+1}} . 
\end{equation}
Indeed, we first notice that
$$\lambda^{2^{b_n}}=2^{2^{b_n}}\cdot e^{2\pi t_n i},$$
where 
$$
t_n=\sum_{j=n+1}^\infty \frac{j-1}{2^{b_{j}-b_n}}.
$$ 
Then
\begin{align*}
2^{2^{b_n}}-\lambda^{2^{b_n}}& =2^{2^{b_n}}\cdot \left(\left(1-\cos(2\pi t_n)\right) - i\sin\left(2\pi t_n\right)\right)\\
& = 2^{2^{b_n}}\cdot 2\sin(\pi t_n)\cdot \left(\sin(\pi t_n)-i\cos(\pi t_n)\right), 
\end{align*}
and so,
\begin{equation}
\label{eq:2 - lambda}
\frac{i}{\pi}\cdot \left(2^{2^{b_n}}-\lambda^{2^{b_n}}\right) = \frac{2^{1+2^{b_n}}\sin(\pi t_n)}{\pi}\cdot e^{\pi t_ni}.
\end{equation}
Now, by the definition of the rapidly increasing sequence $\{b_j\}_{j\ge 2}$, we have that 
\begin{equation}
\label{eq:t_N}
0<t_n-\frac{n}{2^{b_{n+1}-b_n}}< \frac{1}{2^{2^{b_{n+1}}}};
\end{equation}
also, clearly, $t_n\to 0$ as $n\to\infty$. Furthermore, we know that when $t$ is close to $0$, then 
\begin{equation}
\label{eq:sin t}
|\sin t-t|\le t^2.
\end{equation}
So, using the inequalities~\eqref{eq:t_N} and~\eqref{eq:sin t}, along with the fact that $b_{n+1}=2^{b_n}+b_n+1$, we get that
\begin{equation}
\label{eq:moduli N}
\left|n-\frac{2^{1+2^{b_n}}\sin\left(\pi t_n\right)}{\pi}\right|< 2^{-b_n}  \ll 1/b_{n+1} 
\end{equation}
for all $n$ sufficiently large. Also, for $n$ large, using the inequality~\eqref{eq:t_N} we have that 
\begin{equation}
\label{eq:exp t_N}
\left|1-e^{\pi t_ni}\right|< 2^{-b_n} \ll 1/b_{n+1} .
\end{equation}
Recalling~\eqref{eq:2 - lambda} and using~\eqref{eq:moduli N} and~\eqref{eq:exp t_N},  we derive~\eqref{eq:N large small}. 
 \end{example}

Therefore, the conclusion of Theorem~\ref{thm:Qn lambda B} cannot be generalised to inequalities~\eqref{eq:general inequality} where we approximate a polynomial $Q(n)$ with sums of powers of different $\lambda_1, \ldots, \lambda_s$.

Next we proceed to proving Theorem~\ref{thm:Qn lambda B}.

\subsection{Preliminaries}
We first note that if $|\lambda|= 1$, then the inequality~\eqref{eq:QciglambdaB} yields that $|Q(n)|$ is uniformly bounded above and therefore, we can only have finitely many $n\in\N$ satisfying such inequality since $Q$ is a non-constant polynomial. Furthermore, since the exponents $k_i$ appearing in the inequality~\eqref{eq:QciglambdaB} are arbitrary integers, then without loss of generality, we may assume from now on that $|\lambda|>1$.

Now, if some of the exponents $k_i$, $i=1, \ldots, m$, from~\eqref{eq:QciglambdaB} were non-positive, then the absolute value of the corresponding terms $c_i \lambda^{k_i}$ is uniformly bounded above. So, at the expense of replacing $B$ by a larger constant (but depending only on the absolute values of the $c_i$), we may assume from now on, that each exponent $k_i$ from~\eqref{eq:QciglambdaB} is positive.

Let $Q(X) = a_dX^d + \cdots + a_1X + a_0$ for complex numbers $a_0, \ldots, a_d$ with $a_d\ne 0$. 
There exists $N_0\ge 0$ (depending only on $d$ and the absolute values of the coefficients of $Q$) such that 
\begin{equation}
\label{eq:Q growth}
|Q(n)| \le 2\left|a_d n^d\right|\le N_0\cdot n^d\text{ for each }n\ge N_0.
\end{equation}
Furthermore, at the expense of replacing $N_0$ by a larger positive integer (but still depending only on $d$, the absolute values of the coefficients of $Q$ and also depending on $B$ in this case), we may also assume that 
\begin{equation}
\label{eq:spaced-out}
|Q(n_1+n_2)-Q(n_1)|>2B\text{ for each }n_1,n_2\ge N_0.
\end{equation}

\subsection{Induction}
We proceed to prove our desired result by induction on $m$.

We prove first the base case $m = 1$, which also constitutes the inspiration for our proof for the general case in Theorem~\ref{thm:Qn lambda B}. So, we have that $|Q(n)|= O(N^d)$ for each $0\le n\le N$ (see also the inequality~\eqref{eq:Q growth}). Therefore, for $n\le N$ satisfying the inequality 
\begin{equation}
\label{eq:inequality case 1}
\left|Q(n)-c_1\lambda^{k_1}\right|\le B
\end{equation}
one has $|\lambda|^{k_1} = O\(N^d\)$ and thus, $k_1 = O\(\log N\)$. On the other hand, for a given $k_1$, the inequality~\eqref{eq:inequality case 1} is satisfied by $O(1)$ non-negative integers $n$ (see also~\eqref{eq:spaced-out}), thus proving the desired bound  in the case $m=1$.

So, suppose that the result is true for $m \le s$ and we  prove that Theorem~\ref{thm:Qn lambda B} holds when $m=s+1$; clearly we may assume each $c_i$ for $i=1,\ldots,s+1$  is nonzero. Since there are $m$ powers of $\lambda$ in the inequality~\eqref{eq:QciglambdaB}, then in order to prove Theorem~\ref{thm:Qn lambda B}, it suffices to prove that the set $\cS_0$, consisting of all $n\in \N$ for which there exist integers 
\begin{equation}
\label{eq:inequalities}
1\le k_1\le k_2\le \cdots \le k_{s+1}
\end{equation} 
 such that 
\begin{equation}
\label{eq:order n_j}
\left|Q(n) - \sum_{j=1}^{s+1}c_j\lambda^{k_j}\right|\le B
\end{equation}
satisfies 
\begin{equation}
\label{eq:bound 4}
\left\{n\in \cS_0:~ n\le N\right\} \ll  \(\log N\)^{s+1}. 
\end{equation}

Let $\Delta \in\N$ be sufficiently large (but depending only on $|\lambda|$, which is larger than $1$, and also depending on the absolute values of the $c_1, \ldots, c_{s+1}$) such that we have
\begin{equation}
\label{eq:largest power of p}
\frac{|c_{s+1}|}{2}\cdot |\lambda|^{k_{s+1}} \le \left|\sum_{j=1}^{s + 1}c_j\lambda^{k_j}\right|,
\end{equation}
for all integers $k_{s+1} \ge \cdots \ge k_1 \ge 0$  satisfying the 
inequality~\eqref{eq:order n_j} along with the inequality $k_{s+1}-k_s\ge \Delta $.

Now, we let $\cU$ be the subset of $\cS_0$ consisting of integers $n\in\N$ for which one can find integers $k_j$ satisfying~\eqref{eq:order n_j} and in addition, $k_{s+1}-k_s<\Delta $. Then the existence of such a solution tuple $(k_1,\ldots, k_{s+1})$ for each $n\in \cU$ means that 
$$
\left|Q(n) -\left(c_1\lambda^{k_1}+\cdots +c_{s-1}\lambda^{k_{s-1}} + \left(c_s+c_{s+1}\lambda^{k_{s+1}-k_s}\right)\lambda^{k_s}\right)\right|\le B.
$$
Because $k_{s+1}-k_s\in\{0,\ldots, \Delta -1\}$, applying the induction hypothesis for each of the possible $\Delta $ values of $k_{s+1}-k_s$, we obtain the desired conclusion regarding the asymptotic growth given by~\eqref{eq:bound 4} (furthermore, we actually get that the exponent from the right-hand side of the inequality~\eqref{eq:bound 4} is $s=m-1$ not $s+1=m$).  

On the other hand, for each $n\in \cS_0\setminus \cU$ satisfying $n\ge N_0$, we know that there must exist some tuple of nonnegative integers $(k_1,\ldots, k_{s+1})$ satisfying~\eqref{eq:order n_j} and in addition, $k_{s+1}-k_s\ge \Delta $. Then using both~\eqref{eq:Q growth} 
and~\eqref{eq:largest power of p}, we  get 
$$
\frac{|c_{s+1}|}{2}\cdot |\lambda|^{k_{s+1}} - B \le \left|c_0+\sum_{j=1}^{s+1}c_j \lambda^{k_j}\right| - B \le  |Q(n)| \le 2|a_d|\cdot n^d,
$$
which implies that 
\begin{equation}
\label{eq:k s 1}
k_{s+1} \le b_0 \(1+\log n\) 
\end{equation} 
for some positive real number $b_0 $ depending only on $B$, $|\lambda|$, $d$,  $|a_d|$ and $|c_{s+1}|$. 

So, let $N$ be an integer larger than $N_0$; then for each integer $N_0\le n\le N$ contained in $\cS_0\setminus \cU$, we know there exists an $(s+1)$-tuple of integers $k_i$ satisfying~\eqref{eq:inequalities} and~\eqref{eq:order n_j}. Combining the fact that  $1\le k_i\le k_{s+1}$ with the inequality~\eqref{eq:k s 1}, we get that there are at most $\(b_0 \(1+\log N\)\)^{s+1}$ tuples $(k_1,\ldots,k_{s+1})$ for which we could find some $n\in \cS_0\setminus \cU$ satisfying the inequality $N_0\le n\le N$. However, since $n\ge N_0$, then the inequality~\eqref{eq:spaced-out} yields that for any such $(s+1)$-tuple of integers $k_i$, there are \emph{at most} $N_0$ integers $n\in \left(\cS_0\setminus \cU\right)\cap [N_0,N]$ satisfying~\eqref{eq:order n_j} with respect to the tuple $(k_1,\ldots, k_{s+1})$. Hence,  we get the inequality
$$
\#\left\{ n\le N\colon~ n\in \cS_0\setminus \cU\right\} \le 
N_0\cdot \left(1+\left(b_0 \cdot  \(1+\log N\)\right)^{s+1}\right).
$$
for each positive integer $N\ge N_0$. This concludes our proof of  
Theorem~\ref{thm:Qn lambda B}.

\section{Construction and properties of the sieving set of primes}   
\label{sec:prelim}

\subsection{Multiplicative orders}

Let  $\tau_{\ell}(g)$ denote
the multiplicative order of an integer $g\ge 2 $ modulo a prime $\ell$, that is,
the smallest positive integer $\tau$ for which $g^\tau\equiv 1\bmod \ell$.

Let $\alpha$  be a fixed real number such that
\begin{equation}
\label{eq:good alpha}
\# \left\{\ell \le z:\ell~\text{is prime and~}
P(\ell-1)\ge \ell^{\alpha}\right\} \gg \frac{z}{\log z}
\end{equation}
for all sufficiently large $z$, where $P(k)$ denotes the largest
prime divisor of an integer $k\ge 2$, and the implied constant
depends only on $\alpha$.

We recall the following well known result which follows from 
the divisibility $ \tau_\ell(g)\mid \ell-1$ (provided $\gcd(g,\ell)=1$)
and  the bound 
$$
\# \left\{\ell \le z:\ell~\text{is prime,~}P(\ell-1) > \ell^{1/2} \right\} = (1+o(1)) \frac{z}{\log z} 
$$
as $z \to \infty$, which easily follows from a stronger result of
 Erd\H os and  Murty~\cite[Theorem~3]{ErdMur}.
 Details can be found in the work of 
 Kurlberg and  Pomerance~\cite[Lemma~20]{KurPom}.

\begin{lemma}
\label{lem:Mult Ord} For any fixed  $\alpha\ge 1/2$ satisfying~\eqref{eq:good alpha} 
and  any fixed integer $g\ge 1$ we have
$$
\# \left\{\ell \le z:\ell~\text{is prime,~} \tau_\ell(g) \ge \ell^\alpha\right\} \gg \frac{z}{\log z} 
$$ 
as $z \to \infty$.
\end{lemma}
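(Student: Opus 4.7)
The plan is to exploit the divisibility $\tau_\ell(g) \mid \ell - 1$ (valid whenever $\gcd(g,\ell)=1$) together with the hypothesis~\eqref{eq:good alpha}, and then show that only a negligible subset of the primes supplied by~\eqref{eq:good alpha} can fail to inherit a large order from the large prime divisor $p = P(\ell-1)$.

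First I would set
$$
\mathcal{A}(z) := \{\ell \le z : \ell \text{ prime}, \ P(\ell-1) \ge \ell^{\alpha}\},
$$
so that~\eqref{eq:good alpha} gives $\#\mathcal{A}(z) \gg z/\log z$; discarding the finitely many primes dividing $g$ is harmless. For each $\ell \in \mathcal{A}(z)$, write $p = P(\ell-1) \ge \ell^{\alpha}$. Since $\tau_\ell(g) \mid \ell-1$, exactly one of two cases occurs: either $p \mid \tau_\ell(g)$, in which case $\tau_\ell(g) \ge p \ge \ell^{\alpha}$ and $\ell$ is counted in the target set; or $p \nmid \tau_\ell(g)$, in which case $\tau_\ell(g)$ divides $(\ell-1)/p$ and thus $\tau_\ell(g) \le \ell^{1-\alpha} \le z^{1-\alpha}$. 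Call the primes in the second case \emph{bad}.

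Next I would control the bad primes by the standard observation that any bad $\ell$ satisfies $\ell \mid g^m - 1$ for some positive integer $m \le z^{1-\alpha}$, so bad primes are all distinct prime divisors of
$$
N_z := \prod_{1 \le m \le z^{1-\alpha}} (g^m - 1).
$$
Since $\log N_z \le (\log g)\, z^{1-\alpha}(z^{1-\alpha}+1)/2$, the total number of distinct prime factors is at most $\log N_z / \log 2 \ll z^{2(1-\alpha)}$. Subtracting this from $\#\mathcal{A}(z)$ produces the claimed lower bound as soon as $z^{2(1-\alpha)} = o(z/\log z)$.

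The main obstacle is the boundary value $\alpha = 1/2$: here $z^{2(1-\alpha)} = z$ and the crude divisor count is of the same (or larger) order of magnitude than $z/\log z$, so the simple elimination above breaks down and one must invoke a sharper argument of Kurlberg--Pomerance~\cite[Lemma~20]{KurPom} that exploits the cyclotomic factorisation $g^m-1 = \prod_{d \mid m}\Phi_d(g)$ and the congruence $\ell \equiv 1 \pmod{\tau_\ell(g)}$ to save an extra logarithmic factor. For the application driving Theorem~\ref{thm:Qn g}, however, the relevant value of $\alpha$ is strictly greater than $1/2$ (coming from the Baker--Harman estimate~\cite{BaHa}), so the elementary argument above already suffices.
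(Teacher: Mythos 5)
The paper does not actually supply a proof of this lemma; it merely observes that the result follows from the divisibility $\tau_\ell(g) \mid \ell-1$ (for $\gcd(g,\ell)=1$) together with the Erd\H{o}s--Murty estimate, and refers the reader to Kurlberg and Pomerance for the details. Your proposal therefore gives a self-contained elementary argument in place of a citation, and it is correct whenever $\alpha > 1/2$: the dichotomy between the cases $p \mid \tau_\ell(g)$ (forcing $\tau_\ell(g) \ge p \ge \ell^\alpha$) and $p \nmid \tau_\ell(g)$ (forcing $\tau_\ell(g) \mid (\ell-1)/p$, hence $\tau_\ell(g) < \ell^{1-\alpha} \le z^{1-\alpha}$) is sound, and the crude count of at most $\log N_z/\log 2 \ll z^{2(1-\alpha)}$ prime factors of $N_z = \prod_{1\le m\le z^{1-\alpha}}(g^m-1)$ eliminates only $o(z/\log z)$ primes precisely when $2(1-\alpha)<1$.

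That said, the lemma is stated for $\alpha \ge 1/2$, and at the endpoint $\alpha=1/2$ your bound $z^{2(1-\alpha)}=z$ swamps $z/\log z$, as you yourself flag. So your proof establishes a slightly weaker version than stated, with the $\alpha=1/2$ case still resting on the Kurlberg--Pomerance reference. Since the paper only ever invokes the lemma with $\alpha = 0.677 > 1/2$ (from Baker--Harman, via~\eqref{eq:BH bound}), nothing downstream is affected, but it would be cleaner to say at the outset that you prove the lemma for $\alpha > 1/2$. One small remark: the paper's hypothesis reads $g\ge 1$, which is evidently a typo for $g\ge 2$; your argument, like the paper's, implicitly needs $g\ge 2$ so that $g^m-1$ is a nontrivial modulus and $\tau_\ell(g)$ can be large.
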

 
For an integer $s\ge 1$ we denote by $\nu_2(s)$ the $2$-adic order of $s$, that is, 
the largest power $\nu$ such that $2^\nu \mid s$. 
 
\begin{lemma}
\label{lem:Mult Ord Dense} For any fixed  $\alpha\ge 1/2$ satisfying~\eqref{eq:good alpha} 
and  any fixed integer $g\ge 1$ there are some absolute 
constants $C_1, C_2  >0$,  such 
that for every sufficiently large real number $z>1$, there exist some integer $u_0$
and  a set $\cL_z \subseteq [z,C_1z]$ of primes of cardinality
$$
\#   \cL_z \ge  \frac{C_2z}{  \log z \log \log z}
$$
such that  for every $\ell \in  \cL_z$ we have
$$
  P(\ell-1) \ge z^\alpha, \qquad P(\ell-1) \mid \tau_\ell(g),  \qquad \nu_2\( \tau_\ell(g)\) = u_0. 
$$
\end{lemma}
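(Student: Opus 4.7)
The plan is to build $\cL_z$ in three stages, starting from the density guaranteed by \eqref{eq:good alpha} and refining at each stage to enforce one of the three required properties while losing only a controlled fraction of primes.

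First I would set
$$
\cA_z = \{\ell \in [z, C_1 z] : \ell\ \text{prime},\ P(\ell-1) \geq \ell^\alpha\}.
$$
Applying \eqref{eq:good alpha} at $C_1 z$ and subtracting the trivial upper bound $\pi(z) \ll z/\log z$ for the contribution from $[1,z]$, one obtains $\#\cA_z \gg z/\log z$, provided $C_1$ is chosen sufficiently large in terms of the implied constant of \eqref{eq:good alpha}. Since $\ell \geq z$, every $\ell \in \cA_z$ already satisfies $P(\ell-1) \geq z^\alpha$.

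Next I would argue that for most $\ell \in \cA_z$ the prime $P(\ell-1)$ actually divides $\tau_\ell(g)$. If it does not, then since $\tau_\ell(g) \mid \ell-1$, we must have $\tau_\ell(g) \mid k$ where $k := (\ell-1)/P(\ell-1) \leq C_1 z^{1-\alpha}$, whence $\ell \mid g^k-1$. For each fixed $k$, the number of primes $\ell \geq z$ dividing $g^k-1$ is at most $\log(g^k-1)/\log z \ll k/\log z$. Summing over $1 \leq k \leq C_1 z^{1-\alpha}$ produces $O(z^{2(1-\alpha)}/\log z)$ exceptional primes, and this is $o(z/\log z)$ precisely because Baker--Harman supply a value $\alpha > 1/2$ in \eqref{eq:good alpha}. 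Removing the exceptions leaves a set $\cA_z' \subseteq \cA_z$ with $\#\cA_z' \gg z/\log z$ on which both $P(\ell-1) \geq z^\alpha$ and $P(\ell-1) \mid \tau_\ell(g)$ hold.

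Finally I would pin down $\nu_2(\tau_\ell(g))$ by pigeonhole. Since $\tau_\ell(g) \mid \ell-1$, the $2$-adic valuation satisfies $\nu_2(\tau_\ell(g)) \leq \nu_2(\ell-1)$, and the primes $\ell \leq C_1 z$ with $\nu_2(\ell-1) > T$ are exactly those in the arithmetic progression $1 \pmod{2^{T+1}}$, so by Brun--Titchmarsh their count is $\ll z\, 2^{-T}/\log z$. Taking $T = \lceil C \log\log z\rceil$ for a sufficiently large absolute constant $C$ removes only $o(z/\log z)$ primes from $\cA_z'$. On the surviving set, $\nu_2(\tau_\ell(g))$ attains at most $T+1 = O(\log\log z)$ values, so some particular value $u_0$ is realised by $\gg z/(\log z \log\log z)$ of them; these form the desired $\cL_z$.

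The main obstacle is the second stage: to make the count of bad primes strictly smaller than the main term $z/\log z$ one needs the quantitative improvement $\alpha > 1/2$ beyond the mere Erd\H{o}s--Murty threshold, for otherwise $z^{2(1-\alpha)}/\log z$ only matches (rather than beats) $z/\log z$. Thus the Baker--Harman input is essential, not cosmetic, and its exponent is precisely what propagates into the numerical constants $\gamma_m$ of Theorem~\ref{thm:Qn g}.
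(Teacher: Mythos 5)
Your three-stage refinement is correct and mirrors the paper's argument: the paper compresses your Stages 1--2 into a single step by citing Lemma~\ref{lem:Mult Ord} together with~\cite[Lemma~5.1]{LuSh} (your divisor-count showing $P(\ell-1)\mid\tau_\ell(g)$ for all but $O(z^{2(1-\alpha)}/\log z)$ primes is essentially a self-contained proof of that cited lemma), and for Stage 3 the paper uses the trivial bound $z/2^{v_0}$ on integers $\equiv 1\pmod{2^{v_0}}$ where you invoke Brun--Titchmarsh, then pigeonholes over the $O(\log\log z)$ possible $2$-adic valuations exactly as you do. One small caveat on your closing remark: what is needed here is merely \emph{some} $\alpha>1/2$ (which already makes $z^{2(1-\alpha)}/\log z=o(z/\log z)$), and the paper's route is equally dependent on this strict inequality; the specific Baker--Harman value $0.677$ matters only for the numerical size of $\gamma_m$ in Theorem~\ref{thm:Qn g}.
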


\begin{proof} 
Lemma~\ref{lem:Mult Ord} obviously implies that for some absolute constants $C_1, C_3$  there are at least 
 $C_3z/\log z$ primes $\ell\in[z,C_1z]$ satisfying only the first two conditions, see also~\cite[Lemma~5.1]{LuSh}.  
 Let $\overline \cL_z$  be this set. 
Trivially, there are at at most $z/2^{v_0}$ primes $\ell$ with $\nu_2(\tau_\ell(g))\ge v_0$.
Hence  taking a sufficiently large  $C_4$,  and   $v_0 = \fl{C_4 \log \log z}$, 
we  see that if we remove these primes  from 
$\overline \cL_z$ we obtain the set of $\widetilde \cL_z\subseteq \overline \cL_z$ 
of cardinality 
$$
\# \widetilde \cL_z \ge \# \overline \cL_z - z/2^{v_0} \ge 0.5 \# \overline \cL_z \ge 0.5 C_3z/\log z.
$$
Since obviously $\nu_2\( \tau_\ell(g)\) \le  \nu_2\(\ell-1\) \le v_0$, making a majority 
decision we can find a set of $\cL_z$ of cardinality 
$$
\#   \cL_z \ge \frac{\# \widetilde \cL_z }{v_0} \ge \frac{0.5 C_3z}{v_0 \log z}
$$
with $\nu_2\( \tau_\ell(g)\) = u_0$ for some fixed $u_0 \le v_0$  for every $\ell \in  \cL_z$. 
Taking $C_2 = 0.5 C_3/C_4$ we conclude the proof. 
\end{proof} 

We note that the {\it Brun-Titchmarsh\/} theorem (see~\cite[Theorem~6.6]{IwKow}) can be 
used to remove $\log \log z$ in the bound on $\#   \cL_z$ of  Lemma~\ref{lem:Mult Ord Dense}. 
However in our final result we do not try to optimise terms of this order, so we ignore this 
and similar potential improvements.

\subsection{Sieving set $\cL_z$} 
\label{sec:Set Lz} 
We see that using a result of Baker and Harman~\cite{BaHa} one can take
\begin{equation}
\label{eq:BH bound} 
 \alpha =0.677,
\end{equation}
in Lemmas~\ref{lem:Mult Ord} and~\ref{lem:Mult Ord Dense}. 

From now on, for any positive real number $z$, we fix a set $\cL_z$ satisfying the conclusion 
of Lemma~\ref{lem:Mult Ord Dense} with $\alpha$ given by~\eqref{eq:BH bound}.  

\subsection{Bounds of some arithmetic sums} 
\label{eq:prime div}
For an integer $K$ we consider the set  
\begin{equation}
\label{eq:set K}
 \cK =\cK_m(K) 
\end{equation}
where 
\begin{equation}
\label{eq:set KmK}
\cK_m(K)=  \{0, \ldots, K\}^m , 
\end{equation}
and for  $\vk =(k_1, \ldots, k_m) \in \cK$ we define 
\begin{equation}
\label{eq:Fk}
 F(\vk) = \sum_{i=1}^m c_i g^{k_i}. 
\end{equation}

 For a real $z\ge 2$ let  $\omega_z\(n\)$ be the number of distinct prime factors $\ell \in \cL_z$ of $n$.

\begin{lemma}
\label{lem:omega-z} Let an integer $K$ and a real $z$  be sufficiently large.
For  $\cK$ and  $F(\vk)$ 
as in~\eqref{eq:set K} and~\eqref{eq:Fk}, respectively, we have   
$$
\sum_{\vk \in \cK} \omega_z\(F(\vk)\) \ll  \(K^m z^{-\alpha}+ K^{m-1}\)  \# \cL_z .
$$
\end{lemma}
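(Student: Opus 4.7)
\medskip

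The plan is to swap the order of summation so that the prime sieve is on the outside. By definition of $\omega_z$,
\[
\sum_{\vk \in \cK} \omega_z\bigl(F(\vk)\bigr) \;=\; \sum_{\ell \in \cL_z} \#\bigl\{\vk \in \cK \,:\, \ell \mid F(\vk)\bigr\},
\]
so it suffices to bound, uniformly for $\ell \in \cL_z$,
\[
N(\ell) \;=\; \#\bigl\{\vk \in \cK \,:\, \ell \mid F(\vk)\bigr\},
\]
and then sum over $\ell \in \cL_z$. (If $F(\vk)=0$ we may either exclude such tuples or bound their contribution trivially by $O(K^{m-1})$, since fixing $k_m$ forces at most $O(K^{m-1})$ choices for the remaining indices—this term is absorbed in $K^{m-1}\#\cL_z$.)

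First, since $\cL_z\subseteq[z,C_1z]$, for all sufficiently large $z$ every $\ell\in\cL_z$ is coprime to $g$ and to each nonzero $c_i$. Fix such $\ell$ and fix $(k_2,\ldots,k_m)\in\{0,\ldots,K\}^{m-1}$. The divisibility $\ell\mid F(\vk)$ translates into the single congruence
\[
c_1 g^{k_1} \;\equiv\; -\sum_{i=2}^{m} c_i g^{k_i} \pmod{\ell}.
\]
If the right-hand side is $\equiv 0\pmod\ell$, there is no solution in $k_1$ (because $\gcd(c_1 g,\ell)=1$). Otherwise, since $k_1\mapsto g^{k_1}\bmod\ell$ is $\tau_\ell(g)$-periodic and injective on a period, the set of admissible $k_1$ lies in a single residue class modulo $\tau_\ell(g)$. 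Thus the number of admissible $k_1\in\{0,\ldots,K\}$ is at most
\[
\fl{K/\tau_\ell(g)}+1 \;\ll\; K/\tau_\ell(g) + 1.
\]

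By our choice of $\cL_z$ via Lemma~\ref{lem:Mult Ord Dense} with $\alpha$ as in~\eqref{eq:BH bound}, we have $P(\ell-1)\mid \tau_\ell(g)$ and $P(\ell-1)\ge z^\alpha$, hence $\tau_\ell(g)\ge z^\alpha$ for every $\ell\in\cL_z$. Therefore, summing over the $(K+1)^{m-1}$ choices of $(k_2,\ldots,k_m)$ gives
\[
N(\ell) \;\ll\; K^{m-1}\bigl(K z^{-\alpha}+1\bigr) \;=\; K^m z^{-\alpha} + K^{m-1}.
\]
Finally, summing over $\ell\in\cL_z$ yields the stated bound
\[
\sum_{\vk\in\cK}\omega_z\bigl(F(\vk)\bigr) \;\ll\; \bigl(K^m z^{-\alpha}+K^{m-1}\bigr)\,\#\cL_z.
\]

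The main subtlety, rather than an obstacle, is isolating one variable (say $k_1$) and arguing that the residue class of $k_1$ modulo $\tau_\ell(g)$ is uniquely determined by the remaining coordinates. The key structural input is the lower bound $\tau_\ell(g)\ge z^\alpha$ supplied by Lemma~\ref{lem:Mult Ord Dense}; everything else is a straightforward application of the union bound plus the periodicity of $g^{k_1}\bmod \ell$. No contribution from tuples with $F(\vk)=0$ can worsen the bound, since the $F(\vk)=0$ set is contained in a finite union of affine subspaces of dimension $\le m-1$ and contributes $O(K^{m-1})$ to each $N(\ell)$.
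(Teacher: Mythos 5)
Your proof is correct and follows essentially the same route as the paper: swap the order of summation, fix $(k_2,\ldots,k_m)$, and observe that $\ell\mid F(\vk)$ pins $k_1$ to a single residue class modulo $\tau_\ell(g)$, giving the bound $(K+1)^{m-1}\bigl(K/\tau_\ell(g)+1\bigr)$, after which $\tau_\ell(g)\ge z^\alpha$ finishes the job. You spell out explicitly what the paper dispatches with "Clearly the last sum can be estimated as", so there is no substantive difference.

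One small quibble with your parenthetical on $F(\vk)=0$: the claim that "fixing $k_m$ forces at most $O(K^{m-1})$ choices for the remaining indices" is true but useless (it only recovers the trivial $O(K^m)$ after summing over $k_m$). The correct quick argument is the same pigeonhole you use in the main step: fix $(k_2,\ldots,k_m)$; then $F(\vk)=0$ forces $c_1g^{k_1}=-\sum_{i\ge2}c_ig^{k_i}$, which determines $k_1$ uniquely or not at all, so there are at most $(K+1)^{m-1}$ such tuples. (In the paper's application $F(\vk)=n^2$ with $n\ge1$, so this case never actually arises, which is presumably why the paper does not mention it.)
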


\begin{proof} We have
$$
\sum_{\vk \in \cK} \omega_z\(F(\vk)\)   \ll    \sum_{\vk \in \cK} 
\sum_{\substack{\ell \in \cL_z\\ \ell  \mid  F(\vk)}} 1 = \sum_{\ell
\in \cL_z} \sum_{\substack{\vk \in \cK\\ \ell  \mid  F(\vk)}}  1.
$$
Clearly the last sum can be estimated as
\begin{align*}
\sum_{\substack{\vk \in \cK\\ \ell  \mid  F(\vk)}}  1 
& \le (K+1)^{m-1} \(\frac{K+1}{\tau_\ell(g)} + 1\)\\
& \ll K^m \ell^{-\alpha} + K^{m-1} \le  K^mz^{-\alpha} + K^{m-1}, 
\end{align*}
and the result follows. 
\end{proof}

\begin{remark} The proof of Lemma~\ref{lem:omega-z} appeals to essentially trivial bound 
$ O\( K^{m-1} \(K/\tau_\ell(g) + 1\)\)$ on the number of solution to the congruence $F(\vk)\equiv 0 \pmod \ell$, 
$\vk \in \cK$. Using bounds of exponential sums one can obtain a  better bound, which however does not improve our 
final result (see also our Appendix). 
\end{remark}

For a real $\kappa$ we define the sums 
$$
  D_\kappa =  \sum_{\substack{\ell,r\in\cL_z\\P(\ell-1)\ne P(r-1)}}  \gcd\(\ell-1, r-1\)^\kappa.
$$
\begin{lemma}
\label{lem:Sum gcd} Let  a real $z$  be sufficiently large.
Then for $\kappa\ge 1$ we have
$$
D_\kappa \le  z^{ \kappa +\alpha - \alpha \kappa+1 +o(1) } . 
$$
\end{lemma}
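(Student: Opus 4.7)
The plan rests on one main observation: for any pair $(\ell, r)$ contributing to $D_\kappa$, the condition $P(\ell-1) \ne P(r-1)$ forces $\gcd(\ell-1, r-1)$ to be small. Indeed, assume without loss of generality that $P(\ell-1) > P(r-1)$; were $P(\ell-1)$ to divide $r-1$, it would be a prime factor of $r-1$ exceeding $P(r-1)$, a contradiction. Hence $P(\ell-1) \nmid \gcd(\ell-1, r-1)$, and combining $P(\ell-1) \ge z^\alpha$ (from Lemma~\ref{lem:Mult Ord Dense}) with $\ell \le C_1 z$ gives $\gcd(\ell-1, r-1) \le C_1 z^{1-\alpha}$.

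Partitioning the sum by the value of $d := \gcd(\ell-1, r-1)$, I then obtain
$$
D_\kappa \le \sum_{d \le C_1 z^{1-\alpha}} d^\kappa \, N(d)^2, \qquad N(d) := \#\{\ell \in \cL_z : d \mid \ell - 1\}.
$$
Here the truncation of the $d$-sum is exactly what is gained from the previous paragraph. The Brun--Titchmarsh inequality (see~\cite[Theorem~6.6]{IwKow}) yields, for $d \le z^{1-\alpha}$ so that $\log(C_1 z / d) \ge \alpha \log z$, the estimate
$$
N(d) \ll \frac{z}{\varphi(d) \log z}.
$$

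Using the standard lower bound $\varphi(d) \ge d^{1 - o(1)}$ valid uniformly for $d \le z$, I would then reduce everything to a truncated arithmetic sum:
$$
D_\kappa \ll \frac{z^{2 + o(1)}}{\log^2 z} \sum_{d \le C_1 z^{1-\alpha}} d^{\kappa - 2}.
$$
For $\kappa \ge 1$, the last sum is bounded by $z^{(1-\alpha)(\kappa - 1) + o(1)}$, with the $o(1)$ painlessly absorbing the borderline logarithmic behaviour at $\kappa = 1$. Substituting, the resulting exponent simplifies to
$$
2 + (1 - \alpha)(\kappa - 1) = \kappa + \alpha - \alpha\kappa + 1,
$$
which is the bound claimed. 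The only substantive step is the opening gcd observation, since it is what makes the truncation $d \le z^{1 - \alpha}$ possible; from that point onward the computation is a routine combination of Brun--Titchmarsh with elementary estimates of divisor-type sums.
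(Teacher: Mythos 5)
Your proof is correct and follows essentially the same route as the paper: the key observation that $P(\ell-1)\ne P(r-1)$ forces $\gcd(\ell-1,r-1)\ll z^{1-\alpha}$, then a partition over divisor values and the elementary estimate $\sum_{d\le H}d^{\kappa-2}\ll H^{\kappa-1+o(1)}$. The only difference is that you invoke Brun--Titchmarsh and $\varphi(d)\ge d^{1-o(1)}$ to bound the number of $\ell\in\cL_z$ with $d\mid\ell-1$, whereas the paper just counts trivially the integers in $[z,C_1z]$ lying in the progression $1\pmod d$, getting $O(z/d)$ directly; the trivial bound already suffices and avoids the appeal to sieve theory.
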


\begin{proof} Clearly for each pair of primes $(\ell,r)$ in the sum $  D_\kappa$ we have 
$ \gcd\(\ell-1, r-1\)\le  (\ell-1)/P(\ell-1) \le H$  for some integer $H \ll z^{1-\alpha}$. 
Hence 
$$
D_\kappa  \le \sum_{h=1}^H h^{\kappa} \sum_{\substack{\ell,r\in\cL_z\\P(\ell-1)\ne P(r-1)\\  \gcd\(\ell-1, r-1\) = h}} 1. 
$$
We estimate the inner sum trivially as $O\((z/h)^2\)$ and derive
$$
D_\kappa  \le  z^2 \sum_{h=1}^H h^{\kappa-2}  \le z^{2+o(1)} H^{\kappa-1} \le z^{2+(\kappa-1)(1-\alpha)+o(1) } 
,
$$
and the desired result follows. 
\end{proof}

\section{Bounds of character sums}

 \subsection{Complete character sums with diagonal forms over finite fields} 
Let $q$ be an odd prime power and let $\F_q$ be the finite field of $q$ elements. 
We note that for the purpose of proving Theorem~\ref{thm:Qn g}, we only need to estimate the sums of this section
over a prime finite field. However, since our proofs work over arbitrary finite fields, we present them in this more general setting with the hope they would be of independent interest.

We let $m\ge 1$ and $d\ge 2$ be  integers with $d$ coprime with $q$.

Let $\cX$ denote the set 
of multiplicative characters of $\F_q^*$ and let $\cX^* = \cX\setminus \{\chi_0\}$ be the 
set of non-principal characters, we refer to~\cite[Chapter~3]{IwKow} for a background  on characters. 
 We also denote by $\eta \in \cX^*$ the quadratic characters (that is $\eta^2 = \chi_0$). 

We recall that the implied constant may depend on $m$ (but not on $d$,  $q$ and other parameters). 

We start with `pure' bounds of sums of quadratic characters.

We note that in our next result we have an additional condition of $d$ being an even 
integer. 

\begin{lemma}
\label{lem:S pure}
 Assume that   the integer  $d\ge 2$ satisfies $\gcd(d, q)=1$ and  is even. 
Let $a_1,\ldots,a_m\in\F_q^*$.   Then for 
$$
S =  \sum_{x_1,\ldots,x_m\in\F_q} \eta\(a_1x_1^d+\cdots+a_mx_m^d\)
$$
we have 
$$
\left| S \right|   \le d^{m-1}(q-1)q^{(m-1)/2}.
$$
\end{lemma}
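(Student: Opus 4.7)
The plan is to expand $S$ using the Fourier inversion formula for the quadratic character via its Gauss sum. Fix a nontrivial additive character $\psi$ of $\F_q$ and, for any $\chi\in\cX$, set $\tau(\chi)=\sum_{x\in\F_q^*}\chi(x)\psi(x)$. Since $\eta(0)=0$ and $\sum_{y\in\F_q^*}\eta(y)=0$, one has the identity
$$
\eta(t)=\frac{1}{\tau(\eta)}\sum_{y\in\F_q^*}\eta(y)\psi(ty)
$$
uniformly for every $t\in\F_q$. Substituting this with $t=a_1x_1^d+\cdots+a_mx_m^d$ into $S$ and swapping the order of summation so that $y$ is outermost, $S$ factors over $i$ as
$$
S=\frac{1}{\tau(\eta)}\sum_{y\in\F_q^*}\eta(y)\prod_{i=1}^m G_i(y),\qquad G_i(y)=\sum_{x\in\F_q}\psi(ya_ix^d).
$$

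The next step is to evaluate each $G_i(y)$ for $y\ne 0$. Using the standard identity $\#\{x\in\F_q:x^d=u\}=\sum_{\chi^d=\chi_0}\chi(u)$ (with the convention $\chi(0)=0$ for $\chi\ne\chi_0$) and the substitution $u=ya_ix^d$, one obtains
$$
G_i(y)=\sum_{\chi\in\cX_d^*}\bar\chi(ya_i)\tau(\chi),
$$
where $\cX_d^*=\{\chi\in\cX^*:\chi^d=\chi_0\}$ has cardinality $\gcd(d,q-1)-1\le d-1$. Expanding the product of these $m$ identities and applying orthogonality $\sum_{y\in\F_q^*}\xi(y)=(q-1)\,\mathbf{1}[\xi=\chi_0]$ to the resulting sum over $y$ collapses $S$ to
$$
S=\frac{q-1}{\tau(\eta)}\sum_{\substack{(\chi_1,\dots,\chi_m)\in(\cX_d^*)^m\\ \chi_1\cdots\chi_m=\eta}}\prod_{i=1}^m\bar\chi_i(a_i)\tau(\chi_i).
$$

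The bound then drops out from the triangle inequality together with $|\tau(\chi)|=\sqrt q$ for every nontrivial $\chi$. It only remains to count admissible tuples: $\chi_1,\dots,\chi_{m-1}$ can be chosen freely in $\cX_d^*$ (at most $(d-1)^{m-1}$ options) and $\chi_m=\eta\,(\chi_1\cdots\chi_{m-1})^{-1}$ is then determined. This $\chi_m$ does land in the subgroup $\{\chi:\chi^d=\chi_0\}$ precisely because $d$ being \emph{even} forces $\eta^d=\chi_0$, so $\eta$ itself lies in that subgroup; this is the unique place where the parity of $d$ enters the argument. Combining everything gives
$$
|S|\le\frac{q-1}{\sqrt q}\cdot(d-1)^{m-1}\cdot q^{m/2}\le d^{m-1}(q-1)q^{(m-1)/2},
$$
as required. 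The computation is routine once the Gauss-sum framework is set up, and no single step is a serious obstacle; the two points to handle with care are the uniform validity of the Fourier inversion at $t=0$ (where both sides vanish) and the observation that the diagonal character constraint $\chi_1\cdots\chi_m=\eta$ is solvable, which is exactly what the assumption $2\mid d$ ensures.
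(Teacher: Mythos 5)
Your proof is correct, and it takes a genuinely different and more elementary route than the paper's. The paper argues by induction on $m$: it peels off the contribution from $x_m=0$, substitutes $x_i\to x_i x_m$ on the complement (using $\eta(x_m^d)=1$, where $d$ even enters), and then applies Katz's bound~\cite[Theorem~2.1]{Ka02} for nonsingular multiplicative character sums to the resulting inhomogeneous diagonal form $a_1 x_1^d+\cdots+a_{m-1}x_{m-1}^d+a_m$. The depth of the paper's argument therefore sits entirely in Katz's theorem (essentially Deligne's bounds). Your proof instead Fourier-expands $\eta$ via its Gauss sum, factors the resulting additive character sum over the variables, rewrites each one-variable factor $G_i(y)$ using the $d$-th power character identity, and finishes with orthogonality over $y$ together with $|\tau(\chi)|=\sqrt q$. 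The only nonelementary input is $|\tau(\chi)|=\sqrt q$, which is a one-line computation, so your argument is self-contained in a way the paper's is not; it also directly yields the slightly sharper constant $(d-1)^{m-1}$, which the paper also obtains internally before padding up to $d^{m-1}$. The parity of $d$ appears in both arguments but for different structural reasons: in the paper it makes $\eta(x_m^d)=1$ (and handles the base case), while in yours it puts $\eta$ inside the subgroup of characters killed by $d$, so that the constraint $\chi_1\cdots\chi_m=\eta$ with all $\chi_i\in\cX_d^*$ is well-posed. One small point worth stating explicitly: after fixing $\chi_1,\dots,\chi_{m-1}\in\cX_d^*$, the forced $\chi_m$ need not lie in $\cX_d^*$ (it could equal $\chi_0$), but since you only need an \emph{upper} bound on the number of admissible tuples, the count $(d-1)^{m-1}$ is still valid; you implicitly use this, but it deserves a word. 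Otherwise the argument is clean and complete.
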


\begin{proof}
The proof follows by induction on $m$.  For $m=1$, since $d$ is even,  the sum becomes
$$
\left| \sum_{x_1\in\F_q} \eta\(a_1x_1^d\) \right|= q-1.
$$ 
We assume the bound true for $m-1$ and we prove it for $m$. We have
\begin{align*}
S=\sum_{x_m\in\F_q^*}&\sum_{x_1,\ldots,x_{m-1}\in\F_q} \eta\(a_1x_1^d+\cdots+a_mx_m^d\)\\
&\qquad\qquad+\sum_{x_1,\ldots,x_{m-1}\in\F_q} \eta\(a_1x_1^d+\cdots+a_{m-1}x_{m-1}^d\).
\end{align*}
By the induction hypothesis, the second sum in the above is  bounded by $d^{m-2}(q-1)q^{(m-2)/2}$. Hence,  we have
\begin{equation}
\label{eq:S S*}
\left| S \right| \le \left| S^* \right| +  d^{m-2}(q-1)q^{(m-2)/2}, 
\end{equation}
where 
$$
S^*=\sum_{x_m\in\F_q^*}\sum_{x_1,\ldots,x_{m-1}\in\F_q}\eta\(a_1x_1^d+\cdots+a_mx_m^d\),
$$
to which we apply~\cite[Theorem~2.1]{Ka02}. Indeed, since $x_m\ne 0$ in $S^*$, we make the transformation $x_i\to x_ix_m$, $i=1,\ldots,m-1$, 
which does not change the sum. Moreover, since again $d$ is even and $\eta(x_m^d)=1$, we obtain
\begin{equation}
\begin{split}
\label{eq:S*}
S^*&=\sum_{x_m\in\F_q^*} \sum_{x_1,\ldots,x_{m-1}\in\F_q}\eta\(a_1x_1^d+\cdots+a_{m-1}x_{m-1}^d+a_m\)\\
& =(q-1)\sum_{x_1,\ldots,x_{m-1}\in\F_q}\eta\(a_1x_1^d+\cdots+a_{m-1}x_{m-1}^d+a_m\).
\end{split}
\end{equation}

Let now $$F(X_1,\ldots,X_{m-1})=a_1X_1^d+\cdots+a_{m-1}X_{m-1}^d+a_m\in \F_q[X_1,\ldots,X_{m-1}].$$ We note that the equation $F(X_1,\ldots,X_{m-1})=0$ defines a smooth hypersurface in the affine space  $\A^{m-1}(\F_q)$. Indeed, considering the partial derivatives of $F$ with respect to each variable $X_i$, we obtain that the only possible singular point would be $(0,\ldots,0)$. However, since $a_m\ne 0$, this point does not belong to the hypersurface $F(X_1,\ldots,X_{m-1})=0$.

Similarly, the equation given by the leading homogenous part of $F$, $a_1X_1^d+\cdots+a_{m-1}X_{m-1}^d=0$, defines a smooth hypersurface in 
the projective space $\P^{m-2}(\F_q)$.

Applying now~\cite[Theorem 2.1]{Ka02}, we conclude from~\eqref{eq:S*} that
\begin{equation}
\label{eq:S* bound}
 \left| S^* \right|  \le (d-1)^{m-1}  (q-1) q^{(m-1)/2}.
\end{equation}
Substituting~\eqref{eq:S* bound} in~\eqref{eq:S S*}, we obtain
\begin{align*}
|S| & \le  (d-1)^{m-1}  (q-1) q^{(m-1)/2} +  d^{m-2}(q-1)q^{(m-2)/2} \\ 
& \le d^{m-2}(q-1)q^{(m-2)/2}  \((d-1)q^{\frac{1}{2}}+1\).
\end{align*}
Since $(d-1)q^{\frac{1}{2}}+1< dq^{1/2}$, we conclude the proof.
\end{proof}

Next we need the following bound on multidimensional sum of quadratic characters, twisted by 
arbitrary characters. In the next result we do not use that $d$ is even.

\begin{lemma}
\label{lem:S twist} Assume that the integer $d\ge 2$ satisfies $\gcd(d, q)=1$.
Let $a_1,\ldots,a_m\in\F_q^*$.   Then for any $\chi_1, \ldots, \chi_m \in \cX$ we have 
$$
\sum_{x_1,\ldots,x_m\in\F_q} \eta\(a_1x_1^d+\cdots+a_mx_m^d\)\chi_1\(x_1\) \ldots\chi_m\(x_m\)  \ll d^{m}q^{(m+1)/2}.
$$
\end{lemma}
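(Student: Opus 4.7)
My plan is to Fourier-expand the outer quadratic character $\eta$ against a fixed non-trivial additive character $\psi$ of $\F_q$. Writing $G(\eta)=\sum_{z\in\F_q^*}\eta(z)\psi(z)$ for the quadratic Gauss sum, so that $|G(\eta)|=\sqrt{q}$, the standard identity
$$
\eta(y)=\frac{1}{G(\eta)}\sum_{z\in\F_q^*}\eta(z)\psi(zy),\qquad y\in\F_q,
$$
(which is easily verified by the substitution $z\mapsto z/y$ for $y\ne 0$, and gives $0=0$ for $y=0$), after substitution into the sum of the lemma and interchange of summation expresses it as
$$
\frac{1}{G(\eta)}\sum_{z\in\F_q^*}\eta(z)\prod_{i=1}^{m}T_i(z),
\qquad T_i(z):=\sum_{x\in\F_q}\chi_i(x)\,\psi(za_ix^d).
$$
This reduces the $m$-dimensional problem to a product of $m$ one-variable twisted exponential sums.

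The heart of the argument is then the uniform bound $|T_i(z)|\le d\sqrt{q}$, valid for every $z\in\F_q^*$ and every $\chi_i\in\cX$. To establish this I plan to use the fibre structure of the $d$-th power map: with $D=\gcd(d,q-1)\le d$, this map is $D$-to-$1$ from $\F_q^*$ onto the subgroup $H\le\F_q^*$ of index $D$, with kernel $\mu_D$. Grouping the terms of $T_i(z)$ by $y=x^d$ produces the factor $\sum_{\zeta\in\mu_D}\chi_i(\zeta)$; this vanishes unless $\chi_i$ is trivial on $\mu_D$, in which case $\chi_i$ descends to a character $\widetilde\chi_i$ of $H$. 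Writing the indicator of $H$ via the $D$ characters of $\F_q^*$ of order dividing $D$, that is $\mathbf{1}_{y\in H}=\frac{1}{D}\sum_{\lambda^D=\chi_0}\lambda(y)$, then expresses $T_i(z)$ as a combination of at most $D\le d$ classical one-dimensional Gauss sums, each of modulus at most $\sqrt{q}$ (with the trivial character contributing only $-1$).

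Combining these ingredients with $|G(\eta)|=\sqrt{q}$ gives
$$
|S|\le\frac{1}{\sqrt{q}}\,(q-1)\,(d\sqrt{q})^m\le d^m q^{(m+1)/2},
$$
which is the claimed estimate. The main obstacle is producing the bound on $T_i(z)$ uniformly in $\chi_i$: unlike in Lemma~\ref{lem:S pure}, the parity of $d$ plays no role and both principal and non-principal $\chi_i$ must be handled in a single stroke. This uniform treatment is exactly what forces the factor $d^m$ here, rather than the $d^{m-1}$ of Lemma~\ref{lem:S pure}, and is reflected in the slightly weaker exponent $(m+1)/2$ of $q$ in the final bound.
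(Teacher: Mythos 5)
Your proof is correct and delivers exactly the stated bound $d^m q^{(m+1)/2}$, and your route, while in the same spirit, is genuinely cleaner than the paper's. The paper also decouples the variables by additive characters and ultimately rests on the one-variable Weil-type estimate $\sum_{x\in\F_q}\psi(\lambda a x^d)\chi(x)\ll dq^{1/2}$, but it arrives there in two stages: it first writes the sum as $S_1-S_0$ by counting $y\in\F_q^*$ with $y^2=\sum a_ix_i^d$, then detects this equation with $\frac{1}{q}\sum_{\lambda\in\F_q}\psi(\lambda(\cdots))$, and finally cites the Gauss-sum estimate and the Weil bound from the literature. Your Gauss-sum Fourier expansion $\eta(y)=G(\eta)^{-1}\sum_{z\in\F_q^*}\eta(z)\psi(zy)$ collapses these two stages into one, and it handles all choices of $\chi_1,\ldots,\chi_m$ uniformly; by contrast, the paper first disposes of the all-principal case by deferring to Lemma~\ref{lem:S pure}, which was only proved under the extra hypothesis that $d$ is even, so your treatment actually closes a small gap in the generality claimed by the lemma. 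The other difference is that you prove the one-variable estimate $|T_i(z)|\le d\sqrt{q}$ from scratch, via the $D$-to-$1$ structure of the $d$-th power map on $\F_q^*$ (with $D=\gcd(d,q-1)$) and reduction to at most $D$ ordinary Gauss sums, whereas the paper simply invokes Weil. This makes your argument more self-contained without changing the constant, since $D\le d$. In short: same reduction principle (additive-character separation of variables plus 1D Weil), but a more direct expansion of $\eta$ and an elementary in-house proof of the 1D bound.
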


\begin{proof} First we note that if each $\chi_i$ is equal to the principal character, then the result follows from Lemma~\ref{lem:S pure}. So, from now on, we assume that not all of the characters $\chi_i$ are equal to the principal character.

We have 
\begin{equation}
\label{eq:S1S0}
\sum_{x_1,\ldots,x_m\in\F_q} \eta\(a_1x_1^d+\cdots+a_mx_m^d\)\chi_1\(x_1\) \ldots\chi_m\(x_m\)   =   S_1 - S_0, 
\end{equation}
where
$$
S_0= \sum_{x_1,\ldots,x_m\in\F_q}\chi_1\(x_1\) \ldots\chi_m\(x_m\)  $$
and 
$$
S_1 = \sum_{y \in \F_q^*}\sum_{\substack{x_1,\ldots,x_m\in\F_q \\a_1x_1^d+\cdots+a_mx_m^d=y^2}}  \chi_1\(x_1\) \ldots\chi_m\(x_m\)  .
$$ 
Indeed,  we observe that each vector $(x_1,\ldots,x_m)\in\F_q^m$ contributes $2  \chi_1(x_1) \ldots\chi_m(x_m)$ to the sum $S_1$.
It is also easy to see that $S_0$ vanishes unless $\chi_1 =   \ldots =  \chi_m = \chi_0$; therefore, due to our assumption from above, we get that $S_0=0$.  We now fix a nontrivial additive character  $\psi$ of $\F_q$. 
By the orthogonality relation, 
$$
 \frac{1}{q}  \sum_{\lambda \in \F_q} \psi\(\lambda u\)=
\begin{cases} 1, & \text{if}\ u = 0,\\
0, & \text{if}\ u \in \F_q^*,
\end{cases}
$$
see~\cite[Section~3.1]{IwKow}. 
Hence we write 
\begin{align*}
S_1 &=  \sum_{x_1,\ldots,x_m\in\F_q} \sum_{ y \in \F_q^*} \\
&\qquad\qquad\quad \frac{1}{q}   \sum_{\lambda \in \F_q} \psi\(\lambda\(a_1x_1^d+\cdots+a_mx_m^d-y^2\)\) \chi_1(x_1) \ldots\chi_m(x_m)   \\ 
 &=  \frac{1}{q}    \sum_{\lambda \in \F_q} \sum_{ y \in \F_q^*} \psi\(-\lambda y^2\)  \\ 
 &\qquad\qquad\quad\sum_{x_1,\ldots,x_m\in\F_q}  \psi\(\lambda\(a_1x_1^d+\cdots+a_mx_m^d\)\)  \chi_1(x_1) \ldots\chi_m(x_m)  \\
  &=  \frac{1}{q}    \sum_{\lambda \in \F_q} \sum_{ y \in \F_q^*} \psi\(-\lambda y^2\)   \prod_{i=1}^m\sum_{x_i\in\F_q}  \psi\(\lambda a_ix_i^d\)  \chi_i\(x_i\)  .
\end{align*}
The contribution from the terms corresponding to $\lambda = 0 $ is obviously equal to  $\frac{q-1}{q}\cdot S_0=0$ since $S_0=0$ (because not all of the characters $\chi_i$ are equal to the principal character). 
Hence 
\begin{equation}
\label{eq:S1S0W}
S_1  =    W, 
 \end{equation}
 where 
 $$
 W =  \frac{1}{q}    \sum_{\lambda \in \F_q^*} \sum_{ y \in \F_q^*} \psi\(-\lambda y^2\)   \prod_{i=1}^m\sum_{x_i\in\F_q}  \psi\(\lambda a_ix_i^d\)  \chi_i\(x_i\) .
 $$
Now the sum over $y$ differs from the classical Gauss sums by only one term corresponding to $y=0$,  and so we 
have 
\begin{equation}
\label{eq:Gauss}
 \sum_{ y \in \F_q^*} \psi\(-\lambda y^2\) \ll q^{1/2},
 \end{equation}
see~\cite[Theorem~3.4]{IwKow}.
For the remaining sums, using that $\lambda a_i \in \F_q^*$ we apply the Weil bound~\cite[Appendix~5, Example~12]{Weil}
 of mixed sums of additive and multiplicative characters which implies
\begin{equation}
\label{eq:Weil}
\sum_{x_i\in\F_q}  \psi\(\lambda a_ix_i^d\)  \chi_i\(x_i\)   \ll dq^{1/2}, 
 \end{equation}
 see also~\cite[Chapter~6, Theorem~3]{Li}. 
 Therefore, the bounds~\eqref{eq:Gauss} and~\eqref{eq:Weil}, combined together yield
$$
W \ll d^{m}q^{(m+1)/2}.
$$
and together with~\eqref{eq:S1S0} and~\eqref{eq:S1S0W} we conclude the proof. 
\end{proof}

We remark that some, or all, of the characters $\chi_1, \ldots, \chi_m \in \cX$
can be principal and that the implied constant from the conclusion of Lemma~\ref{lem:S twist} depends only on $m$.

\subsection{Incomplete character sums with exponential functions} 

We now extend the definition of $\tau_\ell(g)$ to orders modulo any 
composite moduli $q$ with $\gcd(g,q)=1$.  We also use $(u/q)$ to denote the
{\it Jacobi symbol\/} modulo  an odd $q$. 

Here we need to obtain multidimensional analogues of the result on character sums from~\cite[Section~3]{BaSh}.
Although this does not require new ideas and can be achieved at the cost of merely typographical changes 
we present some short proofs of these results. 

As usual, we write $\e(t)= \exp(2 \pi i t)$ for all $t\in\R$.

We use the following variant of the result of~\cite[Lemma~3.1]{BaSh},  which 
in turn is based on some ideas of Korobov~\cite[Theorem~3]{Kor}.

\begin{lemma}
\label{lem:Prod Form} Let $a_1, b_1 \ldots, a_m, b_m \in \Z$ and let $ \vartheta\in\Z$ with $ \vartheta\ge 2$.
Let   $\ell$ and $r$  be distinct primes with 
$$
t_\ell = \tau_\ell(\vartheta), \qquad t_r = \tau_r( \vartheta), \qquad t = \tau_{\ell r}( \vartheta)
$$
and such that
$$
\gcd\(\ell r, a_1  \ldots a_m \vartheta \) = \gcd\(t_\ell,  t_r\) = 1.
$$ 
We define integers $b_{i,\ell}$ and $b_{i,r}$ by the conditions
$$
b_{i,\ell} t_r+ b_{i, r}   t_\ell  \equiv b_i \pmod {t},\qquad 
0 \le b_{i,\ell} < t_\ell, \quad 0 \le b_{i,r} <  t_r, 
$$
for $i =1, \ldots, m$. Then,
for 
$$
S =  \sum_{k_1, \ldots, k_m=1}^{t}\(\frac{a_1 \vartheta^{k_1} + \ldots + a_m \vartheta^{k_m}}{\ell  r}\)  
\e\(\frac{b_1k_1+ \ldots+ b_mk_m}{t}\)
$$
we have 
$$
S = S_\ell S_r
$$
where 
\begin{align*}
& S_\ell = 
 \sum_{x_1, \ldots, x_m=1}^{t_\ell}\(\frac{a_1 \vartheta^{x_1} + \ldots + a_m \vartheta^{x_m}}{\ell}\) 
\e\(\frac{b_{1,\ell} x_1+ \ldots+ b_{m,\ell} x_m}{t_\ell}\), \\
& S_r =  \sum_{y_1, \ldots, y_m=1}^{t_r}\(\frac{a_1 \vartheta^{y_1} + \ldots + a_m \vartheta^{y_m}}{r}\) 
\e\(\frac{b_{1,r} y_1+ \ldots+ b_{m,r} y_m}{t_r}\).
\end{align*} 
\end{lemma}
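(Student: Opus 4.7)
The plan is to reduce everything to a Chinese Remainder Theorem factorization of the summation index set. Since $t_\ell = \tau_\ell(\vartheta)$, $t_r = \tau_r(\vartheta)$ are coprime by hypothesis, and since the order of $\vartheta$ modulo $\ell r$ is $\mathrm{lcm}(t_\ell, t_r)$, I first note that $t = t_\ell t_r$. The map
$$
\{1,\ldots,t\} \longrightarrow \{1,\ldots,t_\ell\}\times\{1,\ldots,t_r\},\qquad k \longmapsto (x,y),
$$
defined by $k \equiv x \pmod{t_\ell}$ and $k \equiv y \pmod{t_r}$, is therefore a bijection. Applying this bijection coordinatewise to $\vk = (k_1,\ldots,k_m)$ yields pairs $(x_1,\ldots,x_m)$ and $(y_1,\ldots,y_m)$ over which $S$ will be resummed.

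Next I would split the summand. The Jacobi symbol factors multiplicatively as $\left(\tfrac{u}{\ell r}\right) = \left(\tfrac{u}{\ell}\right)\left(\tfrac{u}{r}\right)$ (valid since $\ell, r$ are distinct odd primes in $\cL_z$, the case $\ell=2$ or $r=2$ being excluded), and since $k_i \equiv x_i \pmod{t_\ell}$ we have $\vartheta^{k_i} \equiv \vartheta^{x_i} \pmod{\ell}$, and similarly $\vartheta^{k_i} \equiv \vartheta^{y_i} \pmod{r}$. This gives the clean splitting
$$
\left(\frac{a_1\vartheta^{k_1}+\cdots+a_m\vartheta^{k_m}}{\ell r}\right) = \left(\frac{a_1\vartheta^{x_1}+\cdots+a_m\vartheta^{x_m}}{\ell}\right) \left(\frac{a_1\vartheta^{y_1}+\cdots+a_m\vartheta^{y_m}}{r}\right).
$$
For the exponential, the defining relation $b_i \equiv b_{i,\ell} t_r + b_{i,r} t_\ell \pmod t$ gives, after division by $t = t_\ell t_r$,
$$
\frac{b_i k_i}{t} \;\equiv\; \frac{b_{i,\ell} k_i}{t_\ell} + \frac{b_{i,r} k_i}{t_r} \;\equiv\; \frac{b_{i,\ell} x_i}{t_\ell} + \frac{b_{i,r} y_i}{t_r} \pmod{1},
$$
where in the last step I use $k_i - x_i \in t_\ell\Z$ and $k_i - y_i \in t_r\Z$. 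Exponentiating produces a corresponding product decomposition of $\e(\sum_i b_i k_i / t)$.

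Combining these two factorizations, each summand becomes the product of an $\ell$-factor depending only on $(x_1,\ldots,x_m)$ and an $r$-factor depending only on $(y_1,\ldots,y_m)$, and the CRT bijection above turns $\sum_{\vk \in \{1,\ldots,t\}^m}$ into the independent product $\sum_{x_1,\ldots,x_m} \sum_{y_1,\ldots,y_m}$; this immediately gives $S = S_\ell S_r$. There is no serious analytic obstacle here — the proof is essentially bookkeeping. The only delicate points are (i) verifying that the additive decomposition $b_i = b_{i,\ell} t_r + b_{i,r} t_\ell \pmod t$ really does distribute cleanly across the two moduli after dividing by $t$ (this is where $\gcd(t_\ell,t_r)=1$ is used), and (ii) confirming that the reduced exponentials $\e(b_{i,\ell} x_i/t_\ell)$ and $\e(b_{i,r} y_i/t_r)$ are well-defined as functions of $x_i \bmod t_\ell$ and $y_i \bmod t_r$, which is exactly the purpose of the ranges $0 \le b_{i,\ell} < t_\ell$ and $0 \le b_{i,r} < t_r$.
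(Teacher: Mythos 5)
Your proof is correct and takes essentially the same approach as the paper: a CRT factorization of the index set (using $\gcd(t_\ell,t_r)=1$ to get $t=t_\ell t_r$), multiplicativity of the Jacobi symbol, and an additive split of the exponential. Your choice to parametrize $k$ directly by its residues modulo $t_\ell$ and $t_r$ is a minor streamlining that lands you immediately on $\vartheta^{x_i}$ and $\e(b_{i,\ell}x_i/t_\ell)$, avoiding the final change of variables $x_i\mapsto x_i t_r^{-1}\pmod{t_\ell}$ the paper performs after writing $k=xt_r+yt_\ell$, but the argument is the same in substance.
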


\begin{proof} As in  the proof of~\cite[Lemma~3.1]{BaSh}, 
using that    $\gcd(t_\ell,  t_r)=1$,  we see that  
the integers 
$$
x t_r+ yt_\ell,\qquad  0 \le x <t_\ell,\qquad \ 0 \le y <  t_r, 
$$
run through the complete residue system modulo 
$$
t =t_\ell t_r.
$$ 
Moreover,
\begin{equation}
\label{eq:Congr  mod ell r}
 \vartheta^{x t_r + y t_\ell} \equiv \vartheta^{x t_r} \pmod \ell,\qquad  
 \vartheta^{x t_r + y t_\ell} \equiv \vartheta^{y t_\ell} \pmod r,
\end{equation}
and
\begin{equation}
\label{eq:exp split}
 \e(b (x t_r + y t_\ell)/t)= 
  \e (bx/t_\ell) \e(b y/t_r).
\end{equation}
Hence,
\begin{equation}
\begin{split}
\label{eq:S split} 
S &   = \sum_{x_1, \ldots, x_m=1}^{t_\ell}  \sum_{y_1, \ldots, y_m=1}^{t_{r}}
\(\frac{a_1 \vartheta^{x_1 t_r+ y_1t_\ell}+ \ldots + a_m \vartheta^{x_m t_r+ y_mt_\ell}}{\ell  r}\)  \\
& \qquad \qquad \quad   \e\(\frac{b_1\(x_1 t_r+ y_1t_\ell\)+ \ldots+ b_m\(x_m t_r+ y_mt_\ell\)}{t}\).
\end{split}
\end{equation}

Using the multiplicativity of the Jacobi symbol, and recalling the congruences~\eqref{eq:Congr  mod ell r}, we derive 
\begin{equation}
\begin{split}
\label{eq:Mult Jac}
& \(\frac{a_1 \vartheta^{x_1 t_r+ y_1 t_\ell} + \ldots + a_m \vartheta^{x_m t_r + y_m t_\ell}}{\ell  r}\)  \\
& \qquad \qquad  =  \(\frac{a_1 \vartheta^{x_1 t_r+ y_1 t_\ell} + \ldots + a_m \vartheta^{x_m t_r + y_m t_\ell}}{\ell }\)  \\
& \qquad \qquad \qquad  \qquad \(\frac{a_1 \vartheta^{x_1 t_r+ y_1 t_\ell} + \ldots + a_m \vartheta^{x_m t_r + y_m t_\ell}}{r}\)  \\
& \qquad \qquad  =  \(\frac{a_1 \vartheta^{x_1 t_r} + \ldots + a_m \vartheta^{x_m t_r }}{\ell }\)  \\
& \qquad \qquad \qquad \qquad  \(\frac{a_1 \vartheta^{y_1 t_\ell} + \ldots + a_m \vartheta^{y_m t_\ell}}{r}\) .
\end{split}
\end{equation}
Furthermore, by~\eqref{eq:exp split} we have
\begin{equation}
\begin{split}
\label{eq:Mult  Exp}
&   \e\(\frac{b_1\(x_1 t_r+ y_1t_\ell\)+ \ldots+ b_m\(x_m t_r+ y_m t_\ell\)}{t}\)\\
& \qquad \quad  =   \e\(\frac{b_1x_1  + \ldots+ b_m x_m  }{t_\ell}\)   \e\(\frac{b_1  y_1 + \ldots+ b_m t_m   }{t_r}\). 
\end{split}
\end{equation}
Using~\eqref{eq:Mult Jac} and~\eqref{eq:Mult Exp} in~\eqref{eq:S split}, we see that the sum $S$ can be decomposed 
into a product of two sums as follows
\begin{align*}
S &   = \sum_{x_1, \ldots, x_m=1}^{t_\ell}   \(\frac{a_1 \vartheta^{x_1 t_r} + \ldots + a_m \vartheta^{x_m t_r }}{\ell }\)   \e\(\frac{b_1x_1  + \ldots+ b_m x_m  }{t_\ell}\) \\
& \qquad \quad \sum_{y_1, \ldots, y_m=1}^{t_{r}}  \(\frac{a_1 \vartheta^{y_1 t_\ell} + \ldots + a_m \vartheta^{y_m t_\ell}}{r}\)  \e\(\frac{b_1  y_1 + \ldots+ b_m t_m   }{t_r}\).
\end{align*}
We now replace $x_i$ with $x_i  t_r^{-1} \pmod {t_\ell}$ 
and $y_i$ with $y_i  t_\ell^{-1} \pmod {t_r}$,
and take into account that
$$b_i  t_r^{-1} \equiv b_{i,\ell} \pmod {t_\ell}
\mand
b_i t_\ell^{-1} \equiv b_{i,r} \pmod {t_r},
$$
for $i =1, \ldots, m$. This concludes the proof. 
\end{proof}

Next we estimate the sums $S_\ell$ and $S_r$ which appear in Lemma~\ref{lem:Prod Form}. 
Namely we now establish an analogue of~\cite[Lemma~3.2]{BaSh}. 

\begin{lemma}
\label{lem:Char ell}
Let $a_1, b_1 \ldots, a_m, b_m \in \Z$ and let $ \vartheta\in\Z$ with $ \vartheta\ge 2$.
Let   $\ell$   be  a  prime with 
$$
t_\ell = \tau_\ell(\vartheta)
$$
and such that 
$$
\gcd\(\ell, a_1  \ldots a_m \vartheta \)  = 1 \mand \gcd(t_\ell, 2) = 1.
$$ 
Then for 
\begin{align*}
& S_\ell = 
 \sum_{x_1, \ldots, x_m=1}^{t_\ell}\(\frac{a_1 \vartheta^{x_1} + \ldots + a_m \vartheta^{x_m}}{\ell}\) 
\e\(\frac{b_{1} x_1+ \ldots+ b_{m} x_m}{t_\ell}\)
\end{align*} 
we have 
$$
S_\ell \ll  \begin{cases} \ell ^{(m+1)/2}, & \text{for arbitrary}\  b_1, \ldots, b_m,\\
t_\ell  \ell^{(m-1)/2} , &  \text{for}\  b_1=  \ldots = b_m = 0.
\end{cases}
$$
\end{lemma}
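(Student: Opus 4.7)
\medskip

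\noindent\textbf{Proof plan for Lemma~\ref{lem:Char ell}.} The strategy is to extend the argument of~\cite[Lemma~3.2]{BaSh} to $m$ dimensions by reducing $S_\ell$ to the diagonal character sums estimated in Lemmas~\ref{lem:S pure} and~\ref{lem:S twist}.

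Let $H=\langle\vartheta\bmod\ell\rangle\subset\F_\ell^*$ be the unique subgroup of order $t_\ell$ and put $d=(\ell-1)/t_\ell$. Since $\ell$ is odd (so $\ell-1$ is even) and $\gcd(t_\ell,2)=1$, the index $d$ is even; moreover $H$ is precisely the image of the $d$-th power map on $\F_\ell^*$, with each element of $H$ having exactly $d$ preimages. The map $x\mapsto\vartheta^x$ is a bijection from $\Z/t_\ell\Z$ onto $H$, so the additive phase $\e(b_ix_i/t_\ell)$ transports to a multiplicative character $\psi_i$ of $H$ determined by $\psi_i(\vartheta)=\e(b_i/t_\ell)$. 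Writing $y_i=z_i^d$ for $z_i\in\F_\ell^*$ and setting $\chi_i(z):=\psi_i(z^d)$ (a well-defined character of $\F_\ell^*$), I rewrite
$$
S_\ell=\frac{1}{d^m}\sum_{z_1,\ldots,z_m\in\F_\ell^*}\eta\(a_1z_1^d+\cdots+a_mz_m^d\)\prod_{i=1}^m\chi_i(z_i).
$$

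For arbitrary $b_1,\ldots,b_m$, I extend the sum to $z_i\in\F_\ell$ (with the convention $\chi(0)=0$) and invoke Lemma~\ref{lem:S twist}, whose hypothesis $\gcd(d,\ell)=1$ holds since $d\mid \ell-1$. This yields
$$
|S_\ell|\ll d^{-m}\cdot d^m\ell^{(m+1)/2}=\ell^{(m+1)/2},
$$
which is the first bound. When $b_1=\cdots=b_m=0$, each $\psi_i$ is trivial on $H$ and hence each $\chi_i$ is the principal character of $\F_\ell^*$. Applying Lemma~\ref{lem:S pure}, whose parity hypothesis on the exponent is met \emph{precisely} because $d$ is even, produces
$$
|S_\ell|\le d^{-m}\cdot d^{m-1}(\ell-1)\ell^{(m-1)/2}=t_\ell\,\ell^{(m-1)/2},
$$
the second bound. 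The minor boundary contribution arising from $z_i=0$ terms when passing between $(\F_\ell^*)^m$ and $\F_\ell^m$ is handled by applying Lemma~\ref{lem:S pure} in fewer variables; it is of strictly smaller order.

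The principal technical point, and the one most easily overlooked, is that the reduction to Lemma~\ref{lem:S pure} requires $d$ to be even — this is exactly where the hypothesis $\gcd(t_\ell,2)=1$ enters, since otherwise one would only recover the weaker bound of order $\ell^{(m+1)/2}$ in the case of zero frequencies. Verifying that $\chi_i(z):=\psi_i(z^d)$ is well-defined on all of $\F_\ell^*$ (not just on $H$) and recognising that it is the principal character precisely when $b_i\equiv 0\pmod{t_\ell}$ is the other small check that completes the argument.
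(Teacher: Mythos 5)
Your argument is correct and essentially identical to the paper's: both substitute $w = \rho^x$ (equivalently $z^d = \vartheta^x$) to convert the exponential factors into multiplicative characters of $\F_\ell^*$, then apply Lemma~\ref{lem:S twist} for arbitrary $b_i$ and Lemma~\ref{lem:S pure} (with $d$ even because $t_\ell$ is odd and $\ell-1$ is even) in the zero-frequency case. Your phrasing via the subgroup $H$ and the pulled-back character $\psi_i(z^d)$ is a cosmetic repackaging of the paper's direct definition $\chi_i(\rho^x)=\e(b_i dx/(\ell-1))$; the two characters coincide.
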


\begin{proof} Denoting $d = (\ell-1)/t_\ell$, 
we can write $\vartheta = \rho^d$ with \emph{some} primitive root $\rho$ modulo $\ell$. 
Then,
\begin{equation}
\begin{split}
\label{eq:S DiagForm} 
S_\ell
&= \frac{1}{d^m} \sum_{x_1, \ldots, x_m=1}^{\ell-1}\(\frac{a_1 \rho^{dx_1} + \ldots + a_m \rho^{dx_m}}{\ell}\) \\
& \qquad  \qquad  \qquad  \qquad  \qquad \e\(\frac{d\(b_{1}  x_1+ \ldots+ b_{m} x_m\)}{\ell-1}\)\\
&=\frac{1}{d^m}  \sum_{w_1, \ldots, w_m =1}^{\ell-1}\(\frac{a_1 w_1^d + \ldots + a_m  w_m^d}{\ell}\) 
\chi_1(w_1)\ldots \chi_m(w_m), 
\end{split}
\end{equation}
where for $w \in \F_\ell$ we define $\chi_i$    by
$$
\chi_i(w)=\e\(b_i dx/(\ell-1)\), \qquad i=1, \ldots, m,
$$
where $x$ is any integer for which $w\equiv\rho ^x\pmod \ell$.

As in the proof of~\cite[Lemma~3.2]{BaSh} we observe  $\chi_i$ is a multiplicative character of $\F_\ell$  for 
each $i=1, \ldots, m$. Recalling Lemma~\ref{lem:S twist}, we derive from~\eqref{eq:S DiagForm} 
$$
S_\ell \ll \frac{1}{d^m}  d^{m}\ell^{(m+1)/2} = \ell^{(m+1)/2}, 
$$
which establishes  the desired bound for arbitrary $b_1, \ldots, b_m \in \Z$. 

For $b_1=  \ldots = b_m = 0$ we observe that since $t_\ell$ is odd, $d$ is even and hence 
we can use Lemma~\ref{lem:S pure} instead of Lemma~\ref{lem:S twist}. 
Thus in this case~\eqref{eq:S DiagForm} implies 
$$
S_\ell \ll \frac{1}{d^m}  d^{m-1}\ell^{(m+1)/2} = \frac{1}{d}  \ell^{(m+1)/2} \ll t_\ell  \ell^{(m-1)/2},
$$
which concludes the proof. 
\end{proof}

Lemmas~\ref{lem:Prod Form} and~\ref{lem:Char ell} combined together  imply the following bound.

\begin{cor} 
\label{cor:S compl} Let $a_1, b_1 \ldots, a_m, b_m \in \Z$ and let $ \vartheta\in\Z$ with $ \vartheta\ge 2$.
Let   $\ell$ and $r$  be distinct primes with 
$$
t_\ell = \tau_\ell(\vartheta), \qquad t_r = \tau_r( \vartheta), \qquad t = \tau_{\ell r}( \vartheta)
$$
and such that
$$
\gcd\(\ell r, a_1  \ldots a_m \vartheta \) = \gcd\(t_\ell,  t_r\) =   \gcd(t_\ell t_r, 2) = 1.
$$ 
Then,
for 
$$
S =  \sum_{k_1, \ldots, k_m=1}^{t}\(\frac{a_1 \vartheta^{k_1} + \ldots + a_m \vartheta^{k_m}}{\ell  r}\)  
\e\(\frac{b_1k_1+ \ldots+ b_mk_m}{t}\)
$$
we have 
$$
S \ll \begin{cases}  (\ell r)^{(m+1)/2}, & \text{for arbitrary}\  b_1, \ldots, b_m,\\
t  (\ell r)^{(m-1)/2}, &  \text{for}\  b_1=  \ldots = b_m = 0.
\end{cases}
$$
\end{cor}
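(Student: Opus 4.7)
The proof will be a straightforward synthesis: first apply Lemma~\ref{lem:Prod Form} to factor $S = S_\ell S_r$, then apply Lemma~\ref{lem:Char ell} to each factor separately, and finally multiply the resulting bounds. The hypotheses of Corollary~\ref{cor:S compl} are arranged precisely to make both lemmas applicable: the conditions $\gcd(\ell r,a_1\cdots a_m\vartheta)=1$ and $\gcd(t_\ell,t_r)=1$ are exactly what Lemma~\ref{lem:Prod Form} needs, while $\gcd(t_\ell t_r,2)=1$ splits into $\gcd(t_\ell,2)=\gcd(t_r,2)=1$, which are the parity hypotheses required to invoke Lemma~\ref{lem:Char ell} on each of $S_\ell$ and $S_r$.

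First I would write $S = S_\ell S_r$ using Lemma~\ref{lem:Prod Form}, where the twist frequencies in $S_\ell$ are the residues $b_{i,\ell}$ and those in $S_r$ are $b_{i,r}$, defined through $b_{i,\ell}t_r + b_{i,r}t_\ell \equiv b_i \pmod t$ with $0 \le b_{i,\ell} < t_\ell$ and $0 \le b_{i,r} < t_r$. For the first (unconditional) case of the corollary I would simply apply the first bound of Lemma~\ref{lem:Char ell} to each factor, obtaining $|S_\ell| \ll \ell^{(m+1)/2}$ and $|S_r| \ll r^{(m+1)/2}$; multiplying yields the claimed $(\ell r)^{(m+1)/2}$.

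For the second case, the key observation is that when $b_1 = \cdots = b_m = 0$, the uniqueness of the representation $b_i \equiv b_{i,\ell}t_r + b_{i,r}t_\ell \pmod{t_\ell t_r}$ in the prescribed ranges forces $b_{i,\ell} = 0$ and $b_{i,r} = 0$ for every $i$. Hence the second (stronger) bound of Lemma~\ref{lem:Char ell} applies to both factors, giving $|S_\ell| \ll t_\ell \ell^{(m-1)/2}$ and $|S_r| \ll t_r r^{(m-1)/2}$. Multiplying and using $t = t_\ell t_r$ (which follows from the coprimality $\gcd(t_\ell,t_r)=1$ and $t=\tau_{\ell r}(\vartheta) = \mathrm{lcm}(t_\ell,t_r)$) produces the desired $t(\ell r)^{(m-1)/2}$.

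There is no real obstacle here, since the two lemmas do all the work; the only thing to verify carefully is the bookkeeping on the twist frequencies in the second case, namely that the vanishing of all $b_i$ transfers to the vanishing of all $b_{i,\ell}$ and $b_{i,r}$. Everything else is just combining estimates. The proof can therefore be presented in just a few lines once the two lemmas have been set up.
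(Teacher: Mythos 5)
Your proposal is correct and is exactly the approach the paper takes: the paper simply states that Lemmas~\ref{lem:Prod Form} and~\ref{lem:Char ell} combined together imply the corollary, which is precisely your factorization $S = S_\ell S_r$ followed by applying the appropriate case of Lemma~\ref{lem:Char ell} to each factor. Your observation that $b_1=\cdots=b_m=0$ forces $b_{i,\ell}=b_{i,r}=0$, and that $t = t_\ell t_r$ by coprimality, correctly fills in the bookkeeping the paper leaves implicit.
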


Clearly in Lemma~\ref{lem:Char ell} and Corollary~\ref{cor:S compl} the parity condition on 
multiplicative orders is important only in the case where $b_1=  \ldots = b_m = 0$, 
as only these parts appeal to Lemma~\ref{lem:S pure} (which required $d$ to be even). 

Combining  Corollary~\ref{cor:S compl} with the completing method, see~\cite[Section~12.2]{IwKow}, 
we derive an analogue of~\cite[Lemma~3.4]{BaSh}, which is our 
main technical tool.

\begin{lemma}
\label{lem:Char Main} 
Let $a_1, \ldots, a_m \in \Z$ and let $ \vartheta\in\Z$ with $ \vartheta\ge 2$.
Let   $\ell$ and $r$  be distinct primes with 
$$
\gcd\(\ell r, a_1  \ldots a_m \vartheta \) = \gcd\(\tau_\ell( \vartheta), \tau_r( \vartheta)\) 
= \gcd(\tau_\ell( \vartheta)\tau_r( \vartheta), 2)= 1.
$$ 
Then, for any integers   $L_1, \ldots L_m \ge 1$,  we have
\begin{align*}
&\sum_{k_1 =1}^{L_1}\ldots \sum_{k_m =1}^{L_m}\(\frac{a_1\vartheta^{k_1}  + \ldots + a_m\vartheta^{k_m}}{\ell   r}\) \\
&\qquad \qquad   \ll  L_1 \ldots L_m t^{-m+1} (\ell r)^{(m-1)/2} \\
&\qquad \qquad  \qquad \qquad  \quad  + \(L^{m-1}  t^{-m+1} +1\) (\ell r)^{(m+1)/2}\( \log (\ell r) \)^m,
\end{align*}
where 
$$
L = \max\{L_1, \ldots L_m\} \mand  t = \tau_{\ell r} ( \vartheta), 
$$ 
and the implied constant is absolute.  
\end{lemma}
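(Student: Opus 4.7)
The plan is to apply the standard completion technique to reduce the incomplete multidimensional character sum to the complete sums handled by Corollary~\ref{cor:S compl}. Since the Jacobi symbol $(\cdot/\ell r)$ depends on $k_i$ only through $\vartheta^{k_i}\pmod{\ell r}$, and hence only through $k_i\pmod{t}$, each variable can be reorganised by its residue modulo $t$. I would first write $L_i=q_it+r_i$ with $0\le r_i<t$, so that
$$
\sum_{k_i=1}^{L_i}(\cdot)=q_i\sum_{k_i=1}^{t}(\cdot)+\sum_{k_i=1}^{r_i}(\cdot),
$$
and expand the iterated sum into $2^m$ pieces indexed by the subset $A\subseteq\{1,\ldots,m\}$ of coordinates on which the full period $[1,t]$ is taken, with an accompanying weight $\prod_{i\in A}q_i$.

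Next I would apply Fourier expansion on $\Z/t\Z$ to each residual incomplete sum on a coordinate $j\notin A$: writing the indicator of $[1,r_j]$ as $\frac{1}{t}\sum_{b_j=0}^{t-1}\hat c_j(b_j)\e(b_jk_j/t)$, whose Fourier coefficients satisfy $\hat c_j(0)=r_j$, $|\hat c_j(b_j)|\ll\min(r_j,t/b_j)$ for $b_j\ne 0$, and $\sum_{b_j=0}^{t-1}|\hat c_j(b_j)|\ll t\log t$. For $i\in A$, the sum over $k_i\in[1,t]$ is already complete and corresponds to fixing $b_i=0$. This presents each piece as a linear combination of complete sums of the shape estimated by Corollary~\ref{cor:S compl}.

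I would then split the contribution according to whether $\vec b$ is identically zero. For $\vec b=\vec 0$, Corollary~\ref{cor:S compl} supplies the stronger bound $t(\ell r)^{(m-1)/2}$; summing the associated weights $\prod_{i\in A}q_i\prod_{j\notin A}(r_j/t)$ over all $A\subseteq\{1,\ldots,m\}$ telescopes via the identity $q_i+r_i/t=L_i/t$ into $\prod_i(L_i/t)$, producing the main term $L_1\cdots L_m\,t^{-m+1}(\ell r)^{(m-1)/2}$. For $\vec b\ne\vec 0$, Corollary~\ref{cor:S compl} yields $(\ell r)^{(m+1)/2}$, and bounding the $\ell^1$-sum of the Fourier coefficients through $\sum_{b_j\neq 0}|\hat c_j(b_j)|\ll t\log t$ leads to a total contribution at most
$$
\Bigl(\prod_{i=1}^m(q_i+\log t)-\prod_{i=1}^m q_i\Bigr)(\ell r)^{(m+1)/2}.
$$
An elementary expansion combined with $q_i\le L/t$ gives
$$
\prod_{i=1}^m(q_i+\log t)-\prod_{i=1}^m q_i\ll\bigl((L/t)^{m-1}+1\bigr)(\log t)^m,
$$
which matches the second term of the claimed bound since $\log t\le \log(\ell r)$ and $(L/t)^{m-1}=L^{m-1}t^{-m+1}$.

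The main technical care is bookkeeping: one must verify that the $\vec b=\vec 0$ contributions from different subsets $A$ assemble into a single copy of the main term via the telescoping identity above (rather than $2^m$ copies), and that the nonzero-$\vec b$ contributions cleanly inherit the Fourier tail bound $\sum_{b\neq 0}|\hat c(b)|\ll t\log t$ on each coordinate in $A^c$. Once this accounting is accurately carried out, verifying the elementary inequality for $\prod(q_i+\log t)-\prod q_i$ and invoking Corollary~\ref{cor:S compl} in its two regimes are routine.
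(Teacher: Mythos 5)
Your argument is correct and follows essentially the same completion route as the paper's proof, which tiles the box $[1,L_1]\times\cdots\times[1,L_m]$ by $t$-cubes and invokes the standard completion technique (citing~\cite[Section~12.2]{IwKow}) for the boundary blocks, applying Corollary~\ref{cor:S compl} in both its zero-frequency and nonzero-frequency regimes exactly as you do. Your coordinate-by-coordinate full-plus-remainder decomposition, with the telescoping identity $\prod_i(q_i+r_i/t)=\prod_i(L_i/t)$ assembling the zero-frequency main term, is a slightly different but equivalent bookkeeping of the same estimate, and the closing inequality $\prod_i(q_i+\log t)-\prod_i q_i\ll\bigl((L/t)^{m-1}+1\bigr)(\log t)^m$ is valid since $\log t\ge 1$.
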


\begin{proof} Clearly we can split the above sum into $\fl{L_1/t}\cdot \cdots \cdot \fl{L_m/t}$ complete sums, where each variable 
runs over the complete residue system modulo $t$ and into at most $O\((L/t)^{m-1}+1\)$ 
incomplete sums over a  complete residue system modulo $\ell r$. 

By Corollary~\ref{cor:S compl} each of these complete sums can be estimated as $O\(t  (\ell r)^{(m-1)/2} \)$, so they contribute
$O\(L_1 \ldots L_m t^{-m+1} (\ell r)^{(m-1)/2}\)$ in total. 

By the standard completing 
techniques,  see, for example,~\cite[Section~12.2]{IwKow}, we derive from Corollary~\ref{cor:S compl} that each incomplete 
sum can be estimated as $O\( (\ell r)^{(m+1)/2} \( \log (\ell r) \)^m\)$. 
Therefore, in total  they contribute
$O\(\(L^{m-1}  t^{-m+1} +1\)(\ell r)^{(m+1)/2}\( \log (\ell r) \)^m\)$. 

Combining both contributions together, we conclude the proof. 
\end{proof}

\section{Proof of Theorem~\ref{thm:Qn g}}

\subsection{Preliminary transformations}

 We can always assume that 
$$
k_m \ge \ldots \ge k_1.
$$

 We note that there is an integer  constant $h_0$ depending only on the initial data such that 
 if $k_m \ge k_{m-1} +h_0$ then 
 $$
n^2= \sum_{i=1}^m c_i g^{k_i} \ge 0.5 g^{k_m}
 $$ 
 and hence for some 
\begin{equation}
\label{eq:K log}
K \ll \log N
\end{equation}
we have 
\begin{equation}
\label{eq:max k}
k_m=\max\{k_1, \ldots, k_m \} \ll K.
\end{equation} 
On the other hand,  for $k_m < k_{m-1} +h_0$, writing 
$k_m = k_{m-1} + h$, $h =0, \ldots, h_0-1$ and
$$
n^2=\sum_{i=1}^m c_i g^{k_i} = \sum_{i=1}^{m-2} c_i g^{k_i} + (c_{m-1} + c_mg^h)  g^{k_{m-1}}
$$ 
by Theorem~\ref{thm:Qn lambda B} we obtain at most $O\((\log N)^{m-1}\)$ solutions $n\le N$.

 Hence we now estimate the number of solutions to~\eqref{eq:Qcig} with~\eqref{eq:max k}
 (for $K$ as in~\eqref{eq:K log}).

We recall the notation~\eqref{eq:set K} and~\eqref{eq:Fk}, and we write~\eqref{eq:Qcig}
as
\begin{equation}
\label{eq:sqr Fk}
n^2=   F(\vk).
\end{equation}
We also recall the definitions of the set $\cL_z$ in Section~\ref{sec:Set Lz}
and of $\omega_z(n)$ from Section~\ref{eq:prime div}.

To simplify the exposition everywhere below we replace logarithmic, and 
double logarithmic factors of $z$ with $z^{o(1)}$ (implicitly assuming that $z \to \infty$).
In particular we simply write 
\begin{equation}
\label{eq:Card L simple} 
\# \cL_z = z^{1+o(1)}.
\end{equation}
Since $z^{o(1)}$ also absorbs all implied constants, we use $\le$ instead 
of $\ll$ in the corresponding bounds. 

\subsection{Sieving} 
 Note that if $F(\vk)$ is a perfect square, then we always have
$$
\sum_{\ell \in \cL_z}\(\frac{ F(\vk)}{\ell}\)=\# \cL_z-\omega_z\(F(\vk)\).
$$
Hence 
$$
\# \cL_z \le \left| \sum_{\ell \in \cL_z}\(\frac{ F(\vk)}{\ell}\)\right| + \omega_z\(F(\vk)\). 
$$
Denote by $\cM$ the set of values of $\vk  \in  \cK$  satisfying~\eqref{eq:sqr Fk}
and let $M =\#\cM$ be its cardinality. 
Invoking Lemma~\ref{lem:omega-z}, we obtain 
\begin{align*}
M \# \cL_z & \le  \sum_{\vk \in \cM} \left| \sum_{\ell \in \cL_z}\(\frac{ F(\vk)}{\ell}\)\right| + \sum_{\vk \in \cM}  \omega_z\(F(\vk)\)\\
& \le  \sum_{\vk \in \cM} \left| \sum_{\ell \in \cL_z}\(\frac{ F(\vk)}{\ell}\)\right| + \sum_{\vk \in \cK}  \omega_z\(F(\vk)\)\\
& \ll    \sum_{\vk \in \cM} \left| \sum_{\ell \in \cL_z}\(\frac{ F(\vk)}{\ell}\)\right| + \(K^m z^{-\alpha}+ K^{m-1}\)\#\cL_z .
\end{align*} 
Therefore either 
\begin{equation}
\label{eq:ML Bound 1} 
M \# \cL_z   \ll   \sum_{\vk \in \cM} \left| \sum_{\ell \in \cL_z}\(\frac{ F(\vk)}{\ell}\)\right| 
\end{equation}
or 
\begin{equation}
\label{eq:ML Bound 2} 
M    \ll   K^m z^{-\alpha}+ K^{m-1}.
\end{equation} 

Assuming that~\eqref{eq:ML Bound 1} holds, by the Cauchy inequality
$$
\(M \# \cL_z \)^2  \le  M  \sum_{\vk \in \cM} \left| \sum_{\ell \in \cL_z}\(\frac{ F(\vk)}{\ell}\)\right|^2 
$$ 
and extending summation back to all $\vk \in \cK$ and using~\eqref{eq:Card L simple},  we obtain
\begin{equation}
\label{eq:M SqSieve1} 
M    \le z^{-2 +o(1)}   W, 
\end{equation}
where 
$$
W = \sum_{\vk \in \cK} \left| \sum_{\ell \in \cL_z}\(\frac{ F(\vk)}{\ell}\)\right|^2 
= \sum_{\vk \in \cK}  \sum_{\ell, r \in \cL_z}\(\frac{ F(\vk)}{\ell r}\).
$$
Combining~\eqref{eq:ML Bound 2} and~\eqref{eq:M SqSieve1}, we see that in any case we have
\begin{equation}
\label{eq:M SqSieve2} 
M    \le    \(K^m z^{-\alpha} + K^{m-1}     +  z^{-2} W\)z^{o(1)}.
\end{equation}

We further split the sum  $W$ into two sums as $W=U+V$, where
\begin{equation}
\label{eq:U and V}
\begin{split}
U &= \sum_{\substack{\ell, r\in \cL_z \\ P(\ell-1) = P (r-1)}}
 \sum_{\vk \in \cK}  \(\frac{ F(\vk)}{\ell r}\), \\
V &= \sum_{\substack{\ell,r\in\cL_z\\P(\ell-1)\ne P(r-1)}}  \sum_{\vk \in \cK}  \(\frac{ F(\vk)}{\ell r}\).
\end{split}
\end{equation}
To estimate $U$ (which also includes the diagonal case $\ell = r$), we use the trivial bound $(K+1)^m$ 
on each inner sum, deriving that
\begin{align*}
U & \le  (K+1)^m \sum_{\substack{\ell, r\in \cL_z\\ P(\ell-1)  = P(r-1)}} 1\\
& \le  (K+1)^m  \sum_{d\ge z^\alpha}
 \sum_{\substack{\ell, r\in \cL_z\\ \ell \equiv r\equiv 1 \bmod d}} 1
   \ll K^m \sum_{d\ge z^\alpha}\frac{z^2}{d^2}
 \ll K^m z^{2 -\alpha}. 
 \end{align*}
Hence we can write~\eqref{eq:M SqSieve2} as
\begin{equation}
\label{eq:M SqSieve3} 
M    \le    \(K^m z^{-\alpha} + K^{m-1}     +  z^{-2} V\)z^{o(1)}.
\end{equation}

\subsection{Bounds of character sums} 
To estimate $V$,  we first observe that  for every $\ell \in \cL_z$ the inequality
$\tau_\ell(g)\ge P(\ell -1)\ge \ell^\alpha$  implies (since $\alpha>\tfrac12$)
that $P(\ell-1)\mid \tau_\ell(g)$. 

Fix a pair  $(\ell, r) \in \cL_z{\times}\cL_z$ 
with $P(\ell-1) \ne P(r-1)$ and define
$$
h  =  \gcd\(\tau_{\ell}(g), \tau_r(g)\) \mand\vartheta=g^h.
$$
We observe that 
$$
\tau_\ell(\vartheta)= \tau_{\ell}(g)/h\mand
\tau_r(\vartheta)=\tau_{r}(g)/h.
$$
Furthermore, due to our choice of the set $\cL_z$ in 
Section~\ref{sec:Set Lz}  we have 
$$
\nu_2(\tau_{\ell}(g)) = \nu_2(\tau_{r}(g)) = \nu_2(h)
$$ 
and hence both $\tau_\ell(\vartheta)$ and $\tau_r(\vartheta)$ are odd.

We now write 
\begin{equation}
\label{eq:Tj} 
 \sum_{\vk \in \cK}  \(\frac{ F(\vk)}{\ell r}\)  = 
 \sum_{j_1, \ldots, j_m=1}^h T_{\ell, r} \(j_1, \ldots, j_m\),
\end{equation}
 where 
 \begin{align*}
T_{\ell, r} \(j_1, \ldots, j_m\)&=  \sum_{1 \le k_1 \le (K-j_1)/h}\\
 &\qquad  \ldots 
 \sum_{1 \le k_m \le (K-j_m)/h}
  \(\frac{ c_1 g^{k_1 h+j_1} +\ldots +c_m g^{k_m h+j_m}}{\ell r}\)  \\
&=  \sum_{1 \le k_1 \le (K-j_1)/h}\\
 &\qquad  \ldots 
 \sum_{1 \le k_m \le (K-j_m)/h}
  \(\frac{ c_1 g^{j_1} \vartheta^{k_1} +\ldots +c_mg^{j_m}  \vartheta^{k_m}}{\ell r}\)  . 
\end{align*}
We can certainly assume that $z$ is large enough so that 
$$\gcd(c_1\cdots c_m, \ell r )=1$$ 
for $\ell, r \in \cL_z$. Therefore 
Lemma~\ref{lem:Char Main} applies to $T_{\ell, r} \(j_1, \ldots, j_m\)$ and 
implies
 \begin{align*}
& T_{\ell, r} \(j_1, \ldots, j_m\) \\
& \qquad \ll (K/h)^m  \(\tau_\ell(\vartheta) \tau_r(\vartheta)\)^{-m+1} (\ell r)^{(m-1)/2} \\
 &\qquad \qquad  \quad   +  \((K/h)^{m-1}  \(\tau_\ell(\vartheta) \tau_r(\vartheta)\)^{-m+1}+1\) (\ell r)^{(m+1)/2}\( \log (\ell r) \)^m.
\end{align*}
Using 
$$
\tau_\ell(\vartheta) \tau_r(\vartheta) \gg z^{2 \alpha}  \mand  \ell r \ll z^{2}
$$
we see that 
 \begin{align*}
& T_{\ell, r} \(j_1, \ldots, j_m\) \\
& \quad \ \le \( (K/h)^m  z^{(m-1) (1 -2 \alpha)}  +\( (K/h)^{m-1}  z^{(m-1) (1 -2 \alpha)+2}+z^{m+1}\) \) z^{o(1)}  .
\end{align*}

Therefore, after the substitution in~\eqref{eq:Tj}  we obtain
 \begin{align*}
& \left| \sum_{\vk \in \cK}  \(\frac{ F(\vk)}{\ell r}\)  \right| \\
& \qquad \quad \le \(K^m  z^{(m-1) (1 -2 \alpha)}  + K^{m-1}  h z^{(m-1) (1 -2 \alpha)+2}+h^m z^{m+1} \) z^{o(1)}  .
\end{align*}
Since obviously $h \le\gcd(\ell-1, r-1)$, from the definition of $V$ in~\eqref{eq:U and V} and using ~\eqref{eq:Card L simple}, 
we now derive 
$$
V  \le  \(K^m  z^{(m-1) (1 -2 \alpha)}  + D_1 K^{m-1}   z^{(m-1) (1 -2 \alpha)}+ D_m z^{m-1} \) z^{2+o(1)},
$$
where $D_1$ and $D_m$ are as in Lemma~\ref{lem:Sum gcd}  and thus we get
$$
D_1 \le z^{2+o(1)}  \mand   D_m \le  z^{m+ \alpha - \alpha m + 1+o(1)}.
$$

Therefore 
$$
V  \le  \(K^m  z^{(m-1) (1 -2 \alpha)}  +   K^{m-1}   z^{(m-1) (1 -2 \alpha)+2}+   z^{2m+ \alpha - \alpha m } \) z^{2+o(1)}  .
$$
which after the substitution in~\eqref{eq:M SqSieve3} yields
\begin{align*} 
M    \le    (K^m z^{-\alpha} + K^{m-1}   &  +  K^m  z^{(m-1) (1 -2 \alpha)}  \\
& +   K^{m-1}   z^{(m-1) (1 -2 \alpha)+2}+  z^{2m+ \alpha - \alpha m})z^{o(1)}.
\end{align*}

\subsection{Optimisation} 
Clearly we can assume that
\begin{equation}
\label{eq:z K} 
z \le K^{1/(2-\alpha)}
\end{equation}
 as  otherwise the last term $z^{2m+ \alpha - \alpha m }$ 
exceeds the trivial bound $K^m$.  
Furthermore, for $m\ge 3$  and $\alpha$ as in~\eqref{eq:BH bound},  we have 
\begin{equation}
\label{eq:alpha more than half}
(m-1) (2 \alpha-1) > \alpha
\end{equation}
and hence (using that $z^{(m-1)(1-2\alpha)}<z^{-\alpha}$ due to the inequality~\eqref{eq:alpha more than half}), we can simplify the above bound as follows:
$$
M    \le    \(K^m z^{-\alpha}    + K^{m-1} +  K^{m-1}z^{2-(m-1)(2\alpha-1)}  +    z^{2m+ \alpha - \alpha m }  \)z^{o(1)}.
$$ 
Moreover, since we have~\eqref{eq:z K} and $\alpha<1$, we see that 
$$K^mz^{-\alpha}>K^{m-1}$$
which means that
\begin{equation}
\label{eq:M-bound gen} 
M\le \left(K^{m}z^{-\alpha}+K^{m-1}z^{2-(m-1)(2\alpha-1)}+z^{2m+\alpha-\alpha m}\right)z^{o(1)}.
\end{equation}

First we note that for $m \ge 5$, with  our choice of $\alpha=0.677$ in~\eqref{eq:BH bound} along with our assumption~\eqref{eq:z K}, 
we have that
$$K^mz^{-\alpha}>K^{m-1}z^{2-(m-1)(2\alpha-1)}$$
and thus the second term in~\eqref{eq:M-bound gen}  never dominates and we choose 
$$
z = K^{m/(2m+ 2\alpha - \alpha m)} 
$$ 
to balance the first and the third terms. Hence for   if $m\ge 5$, we obtain:
\begin{equation}
\label{eq:M-bound 5} 
M\le K^{m- m\alpha/(2m+2\alpha-\alpha m)+o(1)}.
\end{equation}

For $m=4$ and  with~\eqref{eq:BH bound}, direct calculations show that as for $m \ge 5$, it is better to balance the
first and the third terms in~\eqref{eq:M-bound gen}  (rather than  the first and the second terms), hence~\eqref{eq:M-bound 5} 
also holds for $m=4$. 

Finally,  for   $m=3$ one checks that balancing  the
first and the  second terms in~\eqref{eq:M-bound gen} with  $z=K^{1/(4-3\alpha)}$ 
leads to an optimal result 
$$M\le \(K^{3- \alpha/(4-3\alpha)} + K^{(6-2\alpha)/(4-3\alpha)}\)K^{o(1)}\le 
K^{3- \alpha/(4-3\alpha)+o(1)},$$
which  concludes the proof (see also \eqref{eq:K log}).

\section{Comments}
\label{sec:comm}

The proof of Theorem~\ref{thm:Qn g},  depends on the bound
 $$
\sum_{\vk \in \cK}  \omega_z\(F(\vk)\)\le  \sum_{\ell\in \cL_z}  T_m(K, \ell),
$$
where  $T_m(K, \ell)$ is the number of solutions  to the congruence 
$$
 F(\vk) \equiv 0 \pmod \ell, \qquad \vk   \in  \cK_m(K), 
 $$
 where $\cK_m(K)$ is given by~\eqref{eq:set KmK}, see the proof of  Lemma~\ref{lem:omega-z}. 
 In fact, in  the proof of  Lemma~\ref{lem:omega-z}  we use the trivial bound 
\begin{equation}
\label{eq:T-bound Triv} 
 T_m(K, \ell) \le (K+1)^{m-1} \(\frac{K+1}{\tau_\ell(g)} + 1\) \ll  K^m z^{-\alpha} + K^{m-1},
\end{equation}
 which holds for any $\ell \in \cL_z$ and is the best possible for $m=2$.
For $m \ge 3$ we get a better bound using exponential sum. This  does not improve 
our final result, however since it can be of independent interest and since it maybe becomes 
important if better bounds of $W$ in~\eqref{eq:M SqSieve1} become available (or maybe 
with some other modifications of the argument) we present such a better bound in
Appendix~\ref{app:A}, see Lemma~\ref{lem:Cong Fk}.

The method of the proof   of Theorem~\ref{thm:Qn g} also works  for relations of the form
$$ 
n^2 =  \sum_{i=1}^m c_i g_i^{k_i}, 
$$ 
with integer coefficients $c_1, \ldots, c_m$ of the same sign and  arbitrary 
integer  bases $g_1, \ldots, g_m \ge 2$. Indeed, 
in this case we still have a bound $O(\log N)$ on the exponents 
$k_1, \ldots, k_m$, which is important for our method. 
It is an interesting open question to establish such a bound for arbitrary 
$c_1, \ldots, c_m$.  Similarly, our method can also be used to estimate 
the number of $n \le N$ which can be represented as 
$$ 
n^2 =  u_1+\ldots+u_m
$$ 
for some {\it $\cS$-units\/} $u_1, \ldots , u_m$, that is, as a sum of $m$ integers 
which have all their prime factors from a prescribed finite set of primes $\cS$. Again, 
if negative values of $u_1, \ldots , u_m$ are allowed then some additional arguments 
are needed to bound the powers of primes in each $\cS$-unit. 

Furthermore, as in~\cite{BaSh} we observe that under the Generalised Riemann Hypothesis 
we can obtain a slightly large value of $\gamma_m$.  We now recall that $p$
is called a  {\it Sophie Germain prime\/} if $p$ and $2p+1$  are both prime. 
Under the assumption of the existence 
of  the expected number of Sophie Germain primes in intervals, or in fact of just $z^{1+o(1)}$ 
such primes up to $z$, we can choose a set $\cL_z$ in the argument of the proof 
of Theorem~\ref{thm:Qn g} with any $\alpha < 1$ and we see that under this assumption 
we can take $\gamma_m = m/(m+2)$ for $m \ge 3$.  

Finally, we note that other perfect powers $n^\nu$ for a fixed $\nu = 3,4, \ldots$, can be investigated 
by our method. However, one needs a version of a result of Baker and Harman~\cite{BaHa} 
for primes $\ell$ in the arithmetic progression $\ell \equiv 1 \pmod \nu$, so that there are
multiplicative characters modulo $\ell$ of order $\nu$. 

 \appendix

\section{Congruences with exponential functions}   
\label{app:A}

First we recall  the following special case of a classical result of Korobov~\cite[Lemma~2]{Kor2}.

\begin{lemma}
\label{lem:Exp Sum} 
Let $a  \in \Z$ and let $ \vartheta\in\Z$ with $ \vartheta\ge 2$.
Let   $\ell$    be a prime with 
$$
t = \tau_\ell(\vartheta), $$
and such that
$$
\gcd\(\ell , a \vartheta \) = 1.
$$ 
Then, we have
$$
\left| \sum_{k  =1}^{t} \e\(a \vartheta^k/t\) \right| \le \ell^{1/2} .
$$
\end{lemma}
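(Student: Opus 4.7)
The plan is to recognise the sum as an exponential sum over the multiplicative subgroup $H=\langle\vartheta\rangle \le \F_\ell^*$ of order $t$, and then carry out the standard Gauss sum computation. Since $\vartheta$ has multiplicative order $t$ modulo $\ell$, the residues $\vartheta,\vartheta^2,\ldots,\vartheta^t$ run through each element of $H$ exactly once, so the sum to estimate equals
$$
S = \sum_{h \in H} \e\(ah/\ell\).
$$

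Next, I would expand the indicator function $\mathbf{1}_H$ using character orthogonality. Let $H^\perp$ denote the set of multiplicative characters of $\F_\ell^*$ that are trivial on $H$; by duality of finite abelian groups, $\left|H^\perp\right| = (\ell-1)/t$, and for every $x\in\F_\ell^*$ one has
$$
\mathbf{1}_H(x) = \frac{t}{\ell-1}\sum_{\chi \in H^\perp}\chi(x).
$$
Substituting this into $S$ and interchanging the order of summation gives
$$
S = \frac{t}{\ell-1}\sum_{\chi \in H^\perp}G\(\chi,a\),\qquad  G\(\chi,a\) = \sum_{x\in\F_\ell^*}\chi(x)\e\(ax/\ell\).
$$

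Finally I would invoke the classical bounds on Gauss sums: the unique trivial character in $H^\perp$ contributes $G(\chi_0,a) = -1$ because $\gcd(a,\ell)=1$, while for each of the remaining $(\ell-1)/t - 1$ nontrivial characters one has $\left|G\(\chi,a\)\right| = \sqrt{\ell}$. Hence
$$
\left| S\right| \le \frac{t}{\ell-1}\(1 + \frac{\ell-1-t}{t}\sqrt{\ell}\) = \sqrt{\ell} - \frac{t\(\sqrt{\ell}-1\)}{\ell-1} \le \sqrt{\ell},
$$
where the last inequality uses $\sqrt{\ell}\ge 1$. I anticipate no serious obstacle; the only bookkeeping that needs to be done carefully is the identification $H^\perp \cong \widehat{\F_\ell^*/H}$ (to get $|H^\perp|=(\ell-1)/t$), and the invocation of the classical modulus formula for Gauss sums twisted by a nontrivial multiplicative character, which can be quoted from any standard reference.
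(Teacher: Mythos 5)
Your proof is correct, but note first that the paper itself supplies no proof of this lemma: it is quoted directly from Korobov~\cite[Lemma~2]{Kor2}, so there is no in-text argument for you to match. You have also silently corrected a typo in the displayed statement: the exponential should read $\e\(a\vartheta^k/\ell\)$, not $\e\(a\vartheta^k/t\)$, which is the version actually used later in the Appendix and the version you estimate.

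Your route — rewrite the sum over $\langle\vartheta\rangle\le\F_\ell^*$, expand the subgroup indicator as $\frac{t}{\ell-1}\sum_{\chi\in H^\perp}\chi$, and invoke $|G(\chi,a)|=\sqrt{\ell}$ for $\chi\ne\chi_0$ together with $G(\chi_0,a)=-1$ — is the standard ``Gauss sum'' proof and your bookkeeping is accurate (the bound you obtain is in fact $\sqrt{\ell}-t(\sqrt{\ell}-1)/(\ell-1)<\sqrt{\ell}$). Korobov's original argument is slightly different in flavour and more elementary: it computes the mean square $\sum_{b=1}^{\ell-1}\bigl|\sum_{k=1}^t\e(b\vartheta^k/\ell)\bigr|^2=t\ell-t^2$ by opening the square, then uses that the inner sum depends only on the coset $bH$, so at least $t$ of the $\ell-1$ values of $b$ give the same modulus as $b=a$; dividing yields $|S|^2\le\ell-t<\ell$ with no appeal to the evaluation of complete Gauss sums. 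Both approaches give the clean bound $\le\ell^{1/2}$; yours is shorter if one is willing to quote the Gauss-sum evaluation, while Korobov's avoids that input entirely and gives the marginally sharper $(\ell-t)^{1/2}$ for free. Either is perfectly adequate here.
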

 
 We now have a bound on $ T_m(K, \ell) $ which improves~\eqref{eq:T-bound Triv}  in some ranges.
 
\begin{lemma}
\label{lem:Cong Fk} Let $m \ge 3$.  Then for $K \ge z$ and $\ell \in \cL_z$, 
where  $\cL_z$ is as in Section~\ref{sec:Set Lz}, we have 
$$
 T_m(K, \ell) \ll  K^m z^{-1}+ K^m   z^{m/2 - \alpha(m-1)-1}. 
 $$
\end{lemma}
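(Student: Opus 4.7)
The plan is to improve the trivial bound~\eqref{eq:T-bound Triv} by expanding the indicator of the congruence into additive characters modulo $\ell$ and then combining Korobov's bound (Lemma~\ref{lem:Exp Sum}) with a second-moment estimate via Cauchy--Schwarz. For $\ell \in \cL_z$ with $z$ large enough, $\gcd(c_i,\ell)=1$ for all $i$, so orthogonality modulo $\ell$ gives
$$
T_m(K,\ell) = \frac{1}{\ell}\sum_{a=0}^{\ell-1}\prod_{i=1}^{m} S_i(a), \qquad S_i(a) = \sum_{k=0}^{K}\e(ac_ig^k/\ell).
$$
The $a=0$ term contributes $(K+1)^m/\ell \ll K^m z^{-1}$, which matches the first summand in the claimed bound, so it remains to estimate $\Sigma := \sum_{a=1}^{\ell-1}\prod_i S_i(a)$.

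I would extract two bounds on $S_i(a)$ for $a\neq 0$. Setting $t=\tau_\ell(g)\ge z^\alpha$, splitting the $k$-range into $\lfloor K/t\rfloor$ complete residue classes modulo $t$ and applying Lemma~\ref{lem:Exp Sum} to each (with the residual incomplete piece handled by the standard completing method) gives the \emph{sup-norm} bound $|S_i(a)|\ll (K/t)\sqrt{\ell}\,z^{o(1)}$, where I use $K\ge z\gg t/C_1$ to absorb the ``$+1$''. Applying orthogonality in the reverse direction, together with the fact that $g^k\equiv g^{k'}\pmod\ell$ iff $k\equiv k'\pmod t$, produces the \emph{$L^2$} bound
$$
\sum_{a=0}^{\ell-1}|S_i(a)|^2 \;=\; \ell\cdot\#\{(k,k')\in\{0,\ldots,K\}^2 : k\equiv k'\!\!\pmod t\} \;\ll\; \ell K^2/t.
$$

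The decisive step is to combine these by dominating $m-2$ of the factors in $\prod_i|S_i(a)|$ by the sup-norm bound and pairing the remaining two via Cauchy--Schwarz:
$$
|\Sigma|\le\(\sup_{a\neq 0}|S_i(a)|\)^{m-2}\!\sum_{a\neq 0}|S_1(a)S_2(a)|\ll\(\frac{K}{t}\sqrt{\ell}\)^{m-2}\cdot\frac{\ell K^2}{t}\cdot z^{o(1)}.
$$
Substituting $t\ge z^\alpha$ and $\ell\asymp z$ collapses this to $|\Sigma|\ll K^m z^{m/2-\alpha(m-1)+o(1)}$, and dividing by $\ell$ yields the second summand $K^m z^{m/2-\alpha(m-1)-1}$ of the claim.

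The main subtlety lies in this last step. A uniform application of the sup-norm bound to all $m$ factors would yield only $K^m z^{m/2-\alpha m}$ after division by $\ell$, losing a factor of $z^{1-\alpha}$ against the target. The saving comes from replacing two sup bounds by an $L^2$ average, which is smaller because the $L^2$-mass of $S_i$ is concentrated on the $\ell/t$ characters trivial on $\langle g\rangle \pmod\ell$ rather than being spread over all of $\widehat{\F_\ell}$. The $z^{o(1)}$ logarithmic losses introduced by the completing method are absorbed into the implicit constants, in accordance with the convention stated around~\eqref{eq:Card L simple}.
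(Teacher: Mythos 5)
Your argument uses the same three ingredients as the paper's proof, in the same roles: orthogonality over $\F_\ell$ to detect the congruence, Korobov's bound (Lemma~\ref{lem:Exp Sum}) applied pointwise to $m-2$ of the one-variable factors, and an $L^2$/Cauchy--Schwarz pairing of the remaining two factors. The one genuine difference lies in how the range of each $k_i$ is handled, and it causes a small problem. The paper first collapses the box $\{0,\ldots,K\}^m$ to $\{0,\ldots,t-1\}^m$, $t=\tau_\ell(g)$, using periodicity of $g^k\bmod\ell$, at the cost of a factor $\bigl((K+1)/t+1\bigr)^m\ll K^m t^{-m}$ (here $K\ge z\gg t$ is used); after this reduction every one-variable sum is a \emph{complete} sum of length $t$, so Lemma~\ref{lem:Exp Sum} applies verbatim and no logarithmic loss appears. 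You instead keep the range $\{0,\ldots,K\}$ and, to get the sup-norm bound on $S_i(a)$, invoke the completing method on the residual incomplete block. That step requires a \emph{twisted} Korobov estimate, i.e.\ a bound on $\sum_{k=1}^{t}\e\bigl(ac_ig^k/\ell+bk/t\bigr)$, which is strictly more than what Lemma~\ref{lem:Exp Sum} as stated provides; it also introduces a factor $\log\ell\asymp\log z$. Your closing sentence that this $z^{o(1)}$ is "absorbed into the implicit constants'' in accordance with the convention around~\eqref{eq:Card L simple} is not right: $z^{o(1)}$ grows with $z$ and is not a constant, and in any case the lemma here is stated with $\ll$, not $\le z^{o(1)}$. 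Both defects disappear at once if you perform the box reduction to $\cK_m(t-1)$ first, exactly as the paper does; with that change the rest of your computation (the $L^2$ identity $\sum_a|S_i(a)|^2=\ell\cdot\#\{k\equiv k'\bmod t\}$, the Cauchy--Schwarz pairing, and the substitutions $t\ge z^\alpha$, $\ell\asymp z$) is correct and gives the stated bound cleanly.
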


\begin{proof} Let $t=\tau_\ell(g)$; then since $\ell\in\cL_z$, we know that $t\ge z^\alpha$. We also let 
$$
 T_m(\ell) =  T_m(t-1 , \ell) .
$$
First we observe that $K \ge z \gg  t $  and thus 
\begin{equation}
\label{eq:TK Tt}
 T_m(K, \ell)  \le  \(\frac{K+1}{t} + 1\)^m T_m(\ell) \ll K^m t^{-m} T_m(\ell). 
\end{equation}
Now, using the orthogonality of exponential functions, we write
$$
 T_m(\ell) = \frac{1}{\ell}  \sum_{\vk \in \cK_m(t-1)} \sum_{a=0}^{\ell-1}  \e\(a F(\vk)/\ell\),
$$
where $\cK_m(t-1)$ consists of all $m$-tuples $(k_1,\dots,k_m)$ of non-negative  integers $k_i<t$. 
Now changing the order of summation, we obtain 
$$
 T_m(\ell) = \frac{1}{\ell} \sum_{a=0}^{\ell-1}   \sum_{\vk \in \cK_m(t-1)} \e\(a F(\vk)/\ell\)
 =    \frac{1}{\ell} \sum_{a=0}^{\ell-1}  \prod_{i=1}^m \sum_{k_i=0}^{t-1}   \e\(a c_i g^{k_i} /\ell\).
$$ 
The term corresponding to $a=0$ is equal to $t^m/\ell$. We can  
assume that $z$ is large enough (as otherwise the bound is trivial) so that $\gcd(c_1\cdots c_m, \ell) =1$
for $\ell \in \cL_z$. We apply now the bound of Lemma~\ref{lem:Exp Sum} to $m-2$ sums
over $k_3, \ldots, k_m$ and derive 
\begin{equation}
\label{eq:T Bound} 
 T_m(\ell)  \le t^m/\ell + \ell^{(m-2)/2}    \frac{1}{\ell} R, 
\end{equation}
where 
$$
R = 
  \sum_{a=0}^{\ell-1}  \left|\sum_{k_1=0}^{t-1}   \e\(a c_1 g^{k_1} /\ell\)\right| \cdot 
   \left| \sum_{k_2=0}^{t-1}   \e\(a c_2 g^{k_2} /\ell\)\right| 
$$
(note that after an application   of Lemma~\ref{lem:Exp Sum} we have added 
the term corresponding to $a=0$ back to the sum). 
By the Cauchy inequality 
$$
R^2  \le \left(\sum_{a=0}^{\ell-1}  \left|\sum_{k_1=0}^{t-1}   \e\(a c_1 g^{k_1} /\ell\)\right|^2\right)\cdot \left(
 \sum_{a=0}^{\ell-1}    \left| \sum_{k_2=0}^{t-1}   \e\(a c_2 g^{k_2} /\ell\)\right|^2\right). 
$$
Using the orthogonality of exponential functions again,  we derive 
$$
 \sum_{a=0}^{\ell-1}  \left|\sum_{k_1=0}^{t-1}   \e\(a c_1 g^{k_1} /\ell\)\right|^2 
 = \ell  t
$$
and similarly for the sum over $k_2$. Hence $R \le \ell  t$, and after substitution in~\eqref{eq:T Bound} 
we derive 
$$
 T_m(\ell)  \le  t^m/\ell + \ell^{(m-2)/2} t, 
$$
which together with the inequality~\eqref{eq:TK Tt} and the fact that $t\ge z^\alpha$ concludes the proof. 
\end{proof}

\section*{Acknowledgements}

D.~G. and S.~S. were partially supported by a Discovery Grant from NSERC, 
A.~O. by ARC Grants~DP180100201 and DP200100355, and I.~S.  by an ARC Grant~DP200100355.

\end{document}